\documentclass[11pt]{article}

\usepackage[letterpaper,margin=1in]{geometry}
\usepackage{setspace}

\usepackage[T1]{fontenc}
\usepackage{baskervillef}
\usepackage[varqu,varl,var0]{inconsolata}
\usepackage[scale=.95,type1]{cabin}
\usepackage[baskerville,vvarbb]{newtxmath}
\usepackage[cal=boondoxo]{mathalfa}
\usepackage{latexsym}

\usepackage{amssymb}

\usepackage{geometry}
\usepackage{graphicx}
\usepackage{epstopdf}
\usepackage{enumerate}
\usepackage{xcolor}
\usepackage{amsthm}
\usepackage{amscd}
\usepackage{amsmath}
\usepackage{thm-restate}
\usepackage{subfiles}
\usepackage{hyperref}
\usepackage{tikz}
\usepackage{subcaption}
\usepackage{utf8math}

\usepackage{todonotes}

\colorlet{cyantransparent}{cyan!50} 
\colorlet{pinktransparent}{pink!50} 
\colorlet{greentransparent}{green!45}

\usetikzlibrary{arrows.meta,decorations.markings}

\usetikzlibrary{arrows.meta,decorations.markings}

\newcommand{\R}{\mathbb{R}}
\newcommand{\NP}{{\sf NP}}
\newcommand{\Z}{\mathbb{Z}}
\newcommand{\one}{\mathbf{1}}

\newcommand{\floor}[1]{\left\lfloor #1 \right\rfloor}
\newcommand{\ceil}[1]{\left\lceil #1 \right\rceil}

\DeclareMathOperator{\supp}{supp}
\DeclareMathOperator{\conv}{conv}
\DeclareMathOperator{\forcing}{forcing}
\DeclareMathOperator{\forcingset}{forcing-set}

\newcommand{\proba}{\Pr}

\newcommand{\eps}{\varepsilon} 
\newcommand{\numrounds}{t}
\newcommand{\rhs}{\delta}
\newcommand{\covproba}{\gamma}
\newcommand{\numrows}{r}
\newcommand{\threshold}{\tau}

\newtheorem{theorem}{Theorem}
\newtheorem{lemma}[theorem]{Lemma}

\newcommand{\rb}[1]{\left(#1\right)}
\newcommand{\set}[1]{\left\{#1\right\}}

\renewcommand{\epsilon}{\varepsilon}

\theoremstyle{remark}
\newtheorem{remark}{Remark}

\usepackage[
    style=numeric, 
    natbib=true,      
    doi=false,        
    url=false,        
    isbn=false,       
    backend=biber,     
    maxbibnames=99,
    minbibnames=99
]{biblatex}

\title{Polyhedral extended formulations that approximate the Gomory closure for packing problems} 
\author{
Friedrich Eisenbrand \thanks{EPFL, Lausanne, Switzerland, \texttt{\{friedrich.eisenbrand, jiaye.wei\}@epfl.ch}}
\and
Samuel Fiorini \thanks{Université libre de Bruxelles, Brussels, Belgium, \texttt{Samuel.Fiorini@ulb.be}}
\and
Lars Rohwedder \thanks{University of Southern Denmark, Odense, Denmark, \texttt{rohwedder@sdu.dk}}
\and
Jiaye Wei \footnotemark[1]
}
\date{} 
\begin{document}
\maketitle

\begin{abstract}
  \noindent
  We consider $0/1$ packing problems  $\max\{c^\intercal x ： Ax≤\one, \, x ∈ \{0,1\}^n\}$, with $A ∈ ℝ_{≥0}^{m ×n}$. A way to solve such problems is via tightening  the linear programming relaxation  $P$ 
  with Gomory  \emph{cutting-planes}.  The Gomory-closure $P'$ of $P$ is the intersection of $P$ with all its cutting planes. 
  The optimization problem over $P'$  is \NP-hard. Mastrolilli (2020) has shown that for fixed $ε>0$, the  Lasserre hierarchy yields a  polynomial-size convex but non-polyhedral extended formulation that approximates $P'$ up to a factor of $1+ε$.  Our main result is the construction of a polyhedral and polynomial extended formulation that approximates $P'$ with the same approximation guarantee. Our construction is based on first principles. Like Mastrolilli's approach, ours also applies to higher iterates $P^{(t)}$ for fixed $t$ and $ε>0$.  

  In contrast to an explicit construction, communication complexity
  provides an alternative way to describe extended formulations. Using
  this approach we obtain a quasi-polynomial polyhedral extended
  formulation for the  above problem that is superior in some parameter regimes. To
  achieve this, we describe a communication protocol extending
  Yannakakis'  protocol to decide whether the clique of Alice
  and the stable set of Bob intersect.

\end{abstract}

\section{Introduction}
\label{sec:introduction}

An \emph{integer program} is a problem of the form
\begin{equation}
  \label{eq:01-ip}
  \max \{c^\intercal x ： x ∈ ℤ^n, Ax ≤ b \},  
\end{equation}
with $c ∈ ℤ^n$, $A ∈ ℤ^{m ×n}$ and $b ∈ ℤ^m$. This framework captures many important \NP-hard optimization problems, see, e.g~\cite{Schrijver03,wolsey2020integer}. Successful approaches to tackle~(\ref{eq:01-ip}) are largely based on the solution of the \emph{linear relaxation} 
  $\max \{c^\intercal x ： x ∈ ℝ^n, Ax ≤ b \}$. 
  The linear relaxation can be solved efficiently and it provides upper bounds, also for sub-problems of~(\ref{eq:01-ip}) that then help in pruning the search space. 
  The efficiency of this approach depends on the quality of the linear relaxation.

  \emph{Cutting planes} provide a way to strengthen the linear relaxation.  The principle was invented by Gomory~\cite{Gomory58} who made the following important observation.  
Denote the polyhedron of feasible solutions of the linear relaxation by $P$.  If $c ∈ℤ^n$ is an integer vector and the inequality $c^\intercal x ≤ δ$ is satisfied by each $x ∈ P$ , then
\begin{equation}
  \label{eq:3}
  c^\intercal x ≤ ⌊δ ⌋
\end{equation}
is satisfied by each feasible \emph{integral}  $x ∈ P ∩ℤ^n$ in the relaxation. Based on this principle, Chvátal~\cite{Chvatal73} defined the notion of a \emph{closure operation} and  \emph{hierarchy}. 
The \emph{first closure} $P'$  of $P$ is the intersection of all Gomory cutting planes that can be derived for $P$. If $P$ is a rational polyhedron, then $P'$  is a rational polyhedron again~\cite{MR597387}. Iterating this procedure $i$~times  yields a polyhedron denoted by $P^{(i)}$ and a finite, see~\cite{MR597387}, \emph{hierarchy}  $P ⊇ P' ⊇ P^{(2)} ⊇ \cdots ⊇ P^{(k)}=P_I$, where $P_I$ is the convex hull of all integral solutions, i.e. the \emph{integer hull} of $P$. The smallest number $k∈ \Z_{\geq 0}$, where $P^{(k)}$ equals $P_I$, is referred to as \emph{Chvátal rank} or simply \emph{rank} of $P$.

For relaxations $P ⊆[0,1]^n$ that are contained in the $0/1$-cube, other important hierarchies have been  developed since then, such as \emph{lift and project}~\cite{BCC96}, the hierarchies of Sherali and Adams~\cite{SheraliAdams90}, Lov{\'a}sz and Schrijver~\cite{LovaszSchrijver90} as well as the one of Lasserre~\cite{MR2041934}. The last two in this list are non-polyhedral relaxations that are based on \emph{semidefinite programming}. Common to all these specific hierarchies for the cube are two important features: a) Optimization over the $t$-th closure can be done in polynomial time for fixed $t$ and b) the corresponding rank of a polytope $P ⊆ [0,1]^n$ is bounded by $n$, see also~\cite{MR1997246}. 
This is not the case for the Gomory-closure of a polytope in the $0/1$-cube. The Chvátal rank of a polytope $P ⊆[0,1]^n$ is upper bounded by a polynomial in $n$~\cite{BEHS99,es03}, but there are cases, for which quadratic lower bounds hold~\cite{rothvoss20170}. Furthermore, optimizing over the first closure is \NP-hard~\cite{eisen99} even for polytopes in the $0/1$-cube~\cite{cornuejols2020rational,cornuejols2018gomory}.  On the other hand, the first Gomory-closure of the fractional matching polytope is already its integer hull~\cite{Edmonds65b}, while the rank of the fractional matching polytope has linear lower bounds w.r.t. the other hierarchies mentioned above~\cite{mathieu2009sherali,au2016comprehensive}.

Bienstock and
Zuckerberg~\cite{bz06} provided a method to approximate the fixed-rank 
Gomory-closure of a \emph{covering problem} up to a scaling-factor of $1+ε$
 in
polynomial time. Recall that an integer program~\eqref{eq:01-ip} is a covering
problem, if the constraints are of the form $Ax≥\one, x ≥0$ for
$A ∈ ℝ^{m ×n}_{≥0}$. Fiorini et al.~\cite{fhw21} provided a simplified
approach with a complexity of $(mn)^{O(t/ε)}$ to approximate the
$t$th Gomory closure for $A ∈\{0,1\}^{m ×n}$. Their result is a \emph{linear (i.e., polyhedral) extended
  formulation } $E = \{(x,y) ∈ ℝ^{n+ℓ} ：Cx + Dy ≤ d \}$ such that the
\emph{projection} if this polyhedron $E$ to the $x$-variables yields a
corresponding relaxation. The stated complexity bound means that the number  $ℓ$ of additional variables, and
the number of constraints in the system $Cx+Dy≤d$ are bounded by
$(mn)^{O(t/ε)}$.

Gomory cutting planes are particularly strong for important \emph{packing problems} such as matching or hypergraph-matching problems~\cite{st10,cl12}. An integer program~\eqref{eq:01-ip} is a packing problem, if the constraints are of the form $Ax≤ \one, x≥0$ with $A ∈ ℝ_{≥0}^{m ×n}$. 
In this setting, Mastrolilli~\cite{mas20} provided a convex, but  non-linear extended formulation that approximates the fixed-rank Gomory-closure. His construction is based on the Lasserre / Sum-of-Squares (SoS) hierarchy.

\subsubsection*{Contributions}

\emph{Our main result} is a linear-programming based extended formulation to approximate the fixed-rank Gomory-closure of packing problems.  
Linear programming is conceptionally much simpler than semidefinite programming and admits
highly scalable solvers in practice. 
The constructions that we use follow from first principles in linear programming such as the convex hull of a finite set of points, which typically exhibit good practical performance combined with standard approaches like
column generation and Dantzig-Wolfe decomposition, see e.g.~\cite{wolsey2020integer, dantzig1960decomposition}. 
A self-contained overview of the techniques for the first closure is in Section~\ref{sec:r-neighb-relax}.

\begin{restatable}{theorem}{approxcg}\label{thm:main-1}
    Let $P = \{x \in \R^n_{\ge 0} : Ax \le \one \}$ be a polytope contained in $[0,1]^n$, where $A \in \R_{\geq 0}^{m \times n}$. For each fixed $\eps \in (0,1/2)$ and $\numrounds\in \Z_{\geq 1}$, there exists a polyhedral relaxation $Q$ of the integer hull of $P$, which is a $(1+\eps)^\numrounds$-approximation of the $\numrounds$-th Gomory closure $P^{(t)}$ of $P$ and has a linear extended formulation of size
    \[
	    (nm)^{(1/\eps)^{O(\numrounds)}}\,.
    \]
  \end{restatable}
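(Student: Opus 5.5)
We will construct $Q$ inductively as $Q_t$, setting $Q_0 = P$ and obtaining $Q_{i}$ from $Q_{i-1}$ by an ``$r$-neighborhood'' relaxation. This relaxation mimics the first Gomory closure but only requires checking cuts against points with small support. The two properties to maintain are:
\[
P_I \subseteq Q_i \quad\text{and}\quad (1+\eps)^{-i} Q_i \subseteq (P)^{(i)} \quad \text{(up to the appropriate monotonicity)} .
\]
Since $P$ is down-closed (packing), all iterates are down-closed, and ``$(1+\eps)$-approximation'' will mean $Q_i \subseteq (1+\eps)^i P^{(i)}$ together with $P_I\subseteq Q_i$.

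\textbf{Step 1: first closure.} For a down-closed polytope $K\subseteq[0,1]^n$ with an extended formulation, we define
\[
N_r(K) = \conv\bigl\{ x \in K : |\supp(x)\setminus\{j: x_j = 1\}|\text{ small} \bigr\}.
\]
More precisely, we take the convex hull of points of $K$ that have at most $r=O(1/\eps)$ coordinates of value at least $\eps$, with the remaining coordinates rounded in a controlled manner. The key claim is that every Gomory cut $c^\intercal x\le\lfloor\delta\rfloor$ valid for $K'$ is violated by at most a factor $1+\eps$ on this set, using only nonnegativity of $c$ (for packing, one can restrict to $c\ge 0$ by down-closedness). Large coordinates of a point contribute to $c^\intercal x$ through at most $r$ ``heavy'' indices. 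Small coordinates are handled by scaling: if $x\in K$ and all $x_j<\eps$ outside a set $S$ of size $\le r$, then $c^\intercal x\le\delta$, and the integral part forced on $S$ together with the fractional slack yields $c^\intercal x\le(1+\eps)\lfloor\delta\rfloor$. Here we use that when $\lfloor\delta\rfloor\ge 1/\eps$ the rounding loses only a factor $1+\eps$. Small $\lfloor\delta\rfloor$ implies that few coordinates can have a large coefficient, which is the combinatorial heart of the argument. Conversely, $P_I$ is contained, because integral points have small fractional support.

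\textbf{Step 2: extended formulation size.} The convex hull over all guesses of the at most $r$ heavy coordinates and their values (discretized to multiples of $\eps/n$, or fixed to $0/1$) gives a disjunctive union of $(n m)^{O(r)}$ polytopes, each a face or slice of $K$. By Balas' disjunctive programming, the convex hull of a union of $N$ polytopes with extended formulations of size $s$ has extended formulation size $O(Ns)$. Hence $\operatorname{size}(Q_i)\le (nm)^{O(1/\eps)}\cdot \operatorname{size}(Q_{i-1})$ at first glance. However, the iterate must approximate $P^{(i)}$, and cuts of rank $i$ have support relations that force the guessing parameter to grow, e.g.\ $r_i=(1/\eps)^{O(i)}$, because rounding errors compound. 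This gives total size $\prod_i (nm)^{r_i}=(nm)^{(1/\eps)^{O(t)}}$.

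\textbf{Step 3: iteration.} We use monotonicity of the closure, $K\subseteq L \Rightarrow K'\subseteq L'$, and the scaling fact $(\lambda K)'\supseteq$ an appropriately scaled $K'$ for down-closed sets. From $Q_{i-1}\subseteq(1+\eps)^{i-1}P^{(i-1)}$ we derive $Q_i\subseteq(1+\eps)^{i}P^{(i)}$. The main obstacle is exactly this compatibility of Gomory rounding with scaling: $\lfloor\lambda\delta\rfloor\ne\lambda\lfloor\delta\rfloor$. I would handle it by proving the one-step lemma directly in the form ``every cut valid for $(\mu K)'$ with $\mu\le(1+\eps)^{i-1}$ loses at most a factor $(1+\eps)$,'' using the large-threshold/small-threshold case split from Step 1 with the threshold $r_i$ enlarged accordingly.
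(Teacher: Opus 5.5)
\textbf{There is a genuine gap in Step 1, and the rest of the plan depends on it.} A set of the form $N_r(K)=\conv\{x\in K : x \text{ has few fractional (or few large) coordinates}\}$ selects fractional points of $K$ by a sparsity or magnitude property of each point alone. Gomory cuts with small right-hand side are already violated by a constant factor at such points.

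Take the packing system $\frac{2}{3}\sum_{j=1}^n x_j\le 1$, $x_j\le 1$ for $j\in[n]$. Multiplying the first row by $3/2$ and rounding gives the rank-$1$ cut $\sum_j x_j\le 1$.
\begin{itemize}
\item The point $(\frac34,\frac34,0,\dots,0)\in P$ has only two nonzero coordinates.
\item The point $\frac{3}{2n}\one\in P$ has no coordinate $\ge\eps$ once $n>3/(2\eps)$.
\end{itemize}
Both have coordinate sum $3/2>1+\eps$, so neither lies in $(1+\eps)P'$. Yet under each of your readings of $N_r$ (with $r\ge 2$), at least one of them belongs to $N_r(P)$. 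If instead you round light coordinates to $0$ and heavy ones to $\{0,1\}$, you keep only integer points with at most $r$ ones and lose $P_I\subseteq N_r(K)$, e.g.\ for $P=[0,1]^n$.

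The same example refutes your claimed ``combinatorial heart'': a cut with $\floor{\delta}=1$ can have $n$ coefficients equal to $1$ (clique inequalities are the classical instance), so nothing forces few heavy coordinates. Step 3 only names the rounding-versus-scaling obstacle without resolving it. The growth $r_i=(1/\eps)^{O(i)}$ in Step 2 is asserted rather than derived.

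\textbf{What is missing} is to impose \emph{integrality} locally and to certify each cut by \emph{averaging}, rather than to restrict to sparse fractional points. The paper builds a single relaxation $Q$ and never iterates the construction, which is how it avoids your Step 3 problem. With $N(i)=\{j: A_{ij}\ge\tau\}$ and $N(T)=\bigcup_{i\in T}N(i)$, it requires, for every $T\in[m]^r$, that the restriction of $x$ to $N(T)$ be a convex combination of integer points of $P$ supported in $N(T)$. Such points have at most $r/\tau$ ones, giving size $O(m^r n^{1+r/\tau})$. Every $c^\intercal x\le\delta$ valid for $P_I$ with $c\ge 0$ then yields the valid inequality $\sum_{j\in N(T)}c_jx_j\le\delta$ for $Q$.

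For a rank-$t$ cut with $\delta<1/\eps$, the paper takes the Chv\'atal--Gomory multipliers along the derivation DAG and eliminates intermediate nodes with right-hand side $\ge 1/\eps$, losing a factor $1-\eps$ per level. The resulting weights induce a distribution on rows that covers every $j\in\supp(c)$ with probability at least $\gamma=\frac12\eps^t(1-\eps)^{2t-1}$. Averaging the neighborhood inequalities over $r=\ceil{\gamma^{-1}\ln(1+1/\eps)}$ i.i.d.\ rows gives $c^\intercal x\le(1+\eps)\delta$; this is where $r=(1/\eps)^{O(t)}$ actually comes from. Cuts with $\floor{\delta}\ge 1/\eps$ are handled by induction exactly as in your large-threshold case. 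That case split is the one ingredient your plan shares with the paper.
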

  %
  \begin{remark}
    To obtain a $(1+\tilde{\eps})$-approximation of $P^{(t)}$, one can simply replace $1/\eps$ by $t/\tilde{\eps}$ in the size bound of Theorem~\ref{thm:main-1}. See Appendix~\ref{appendix:approx-factor} for details. 
  \end{remark}
  \begin{remark}
    We want to mention that a related, but much simpler approach, also succeeds for covering problems. The details are in  Appendix~\ref{appendix:covering}. 
  \end{remark}
\noindent 
The complexity of  the extended formulation in 
Theorem~\ref{thm:main-1} is optimal for the first closure. 
This follows from the exponential lower bound on the linear extension complexity of matching by Roth\-voss~\cite{rothvoss2017matching} and its generalization  for a $(1+\eps)$-approximation of the integer hull  by Sinha~\cite{sin18}. The author proved a $n^{\Omega(1/\eps)}$ lower bound on the size of any linear extended formulation providing a $(1+\eps)$-approximation of the integer hull of matchings. Recall that the first Gomory closure of the fractional matching polytope is integral. 

\medskip
\noindent 
\emph{Matching problems} are a show-case for the strength of the Gomory closure. 
As mentioned above, the Gomory closure of the fractional matching polytope is the integral matching polytope. A generalization that has also received considerable attention in the recent literature is the \emph{unweighted} maximum matching problem in \emph{$k$-uniform hypergraphs}, also called \emph{$k$-set packing}.  A hypergraph $G = (V,E)$ is $k$-uniform, if each hyperedge consists of $k$ vertices. Chan and Lau~\cite{cl12} have shown that the integrality gap of  $Ω(n/k^3)$ rounds of the Sherali-Adams hierarchy on the standard LP relaxation remains at least $k− 2$. There are known polynomial time approximation algorithms that achieve a factor of $(k+1)/3$\cite{hurkens1989size,cygan2013improved,furer2014approximating}. Recently, Lee et al.~\cite{lee2025asymptotically} provided an inapproximability that matches this upper bound up to a constant. 

\smallskip 
\noindent
Singh and Talwar~\cite{st10} have shown that $O(k^2)$ iterations of the Gomory-closure operation suffice to reduce the integrality gap of the standard linear programming formulation to $(k+1)/2$.  We substantially improve this result. 
 \begin{restatable}{theorem}{hyper}\label{thm:hyper}
    For the maximum matching problem in $k$-uniform hypergraphs $G=(V,E)$, it suffices to apply $O(\log k)$ iterations of the Gomory-closure operation to the standard LP relaxation to reduce the integrality gap to $(k+1)/2$.
  \end{restatable}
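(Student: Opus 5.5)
The plan is to reduce the statement to a combinatorial certificate of integrality gap $(k+1)/2$ that uses only a restricted family of strengthened inequalities, and then to show that each inequality in this family has Chvátal rank $O(\log k)$ with respect to the standard relaxation $P=\{x\ge 0 : x(\delta(v))\le 1\ \forall v\in V\}$.

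\textbf{Step 1: choose the target inequalities.} For the unweighted problem I would take the \emph{local clique inequalities}
\[
x(F)\le 1 \qquad \text{for every pairwise intersecting } F\subseteq E \text{ of small size},
\]
where ``small'' means at most $k+1$ edges, or more specifically an edge $e$ together with at most $k$ edges meeting it. Chan and Lau~\cite{cl12} and Singh and Talwar~\cite{st10} already show that constraints of this local type bring the gap down to $(k+1)/2$. The argument is a local-search/dual-fitting one: take a maximum matching $M$, let each $e\in M$ absorb the LP weight of the edges that meet it, and use that a locally optimal $M$ admits no improving swap. This charges at most $(k+1)/2$ per edge of $M$. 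I would re-derive this charging argument in the form I need, so as to pin down exactly which inequalities are used. In particular I want to check that only families of the shape ``edges meeting a fixed edge $e$ that are pairwise intersecting'' (plus the degree constraints) appear.

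\textbf{Step 2: the rank bound by a doubling induction.} This is the heart of the proof. A naive induction is too slow: averaging the inequalities $x(F\setminus\{f\})\le 1$ over $f\in F$ gives $x(F)\le |F|/(|F|-1)$, and rounding gives the clique inequality. That yields rank $O(|F|)=O(k)$, essentially the Singh--Talwar bound. Instead I would prove by induction on $r$ the following claim: in $P^{(r)}$, every pairwise intersecting family $F$ with $|F|\le 2^r$ satisfies a bound of the form $x(F)\le 1+\theta_r$, and also every $x(F)$ with $F$ a union of two such families is bounded by a quantity strictly below $2$.

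To go from level $r$ to $r+1$ I would split $F=F_1\cup F_2$ into halves. The key is to bound $x(F_1)+x(F_2)$ using more than the two half-inequalities. Since every edge of $F_1$ meets every edge of $F_2$, I would add the degree constraints $x(\delta(v))\le 1$ for a suitable set of shared vertices, with fractional multipliers chosen so that each edge of $F$ is covered with coefficient at least $1$ while the right-hand side stays strictly below $2$. The Chvátal--Gomory rounding then gives $x(F)\le 1$ in one additional round. With $|F|\le k+1$, this yields $\lceil\log_2(k+1)\rceil+O(1)$ rounds. Since the inequalities used in Step 1 are valid for $P^{(O(\log k))}$, the integrality gap of $P^{(O(\log k))}$ is at most $(k+1)/2$.

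\textbf{Main obstacle.} I expect the merging step of Step 2 to be the main difficulty. A pairwise intersecting family need not be a star: think of triangle-like configurations, where the clique inequality genuinely needs rounding. So the multipliers must exploit the special structure from Step 1, namely that all edges meet a fixed edge $e$, which has only $k$ vertices.

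First attempt: induct not on $|F|$ but on the number of vertices of $e$ involved. Partition $e$ into two halves $e_1,e_2$ and group edges of $F$ by which half they meet. Combine the level-$r$ inequalities for the two groups with the degree constraints at the $k$ vertices of $e$, so that the combined right-hand side is $2-\Omega(1)$ and rounding gives $1$. Halving $e$ each time gives depth $\log_2 k$.

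If exact multipliers fail, fallback: allow the inductive bound $x(F)\le 1+2^{-r}$-type slack to be carried through the rounds. Then verify that after $O(\log k)$ rounds the accumulated inequalities still suffice for the charging argument of Step 1, up to the stated factor $(k+1)/2$.
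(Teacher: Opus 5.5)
\textbf{There is a genuine gap, and it is exactly the merge step you flag.} You propose to get $x(F_1\cup F_2)\le 1$ in \emph{one} extra round from $x(F_1)\le 1$, $x(F_2)\le 1$ and degree constraints at shared vertices, using multipliers with right-hand side $2-\Omega(1)$. In general no such combination exists, already at the bottom level where $F_1,F_2$ are stars.

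Take the $3$-uniform hypergraph with edges $f_1=\{u,v_{11},v_{12}\}$, $f_2=\{u,v_{21},v_{22}\}$, $g_1=\{w,v_{11},v_{21}\}$, $g_2=\{w,v_{12},v_{22}\}$, $e=\{u,w,p\}$. The family $\delta(u)\cup\delta(w)$ is intersecting and all its members meet $e$. Let $y=\tfrac13$ on $f_1,f_2,g_1,g_2$ and $y_e=0$; then $y\in P'$. To see this, take vertex multipliers $\lambda\in[0,1)^V$ (these suffice) with $\Lambda=\sum_v\lambda_v$. If $\Lambda<1$, all rounded coefficients of $f_1,f_2,g_1,g_2$ vanish. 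Otherwise, since each vertex lies in at most two of these edges, the rounded coefficients sum to at most $\lfloor 2\Lambda\rfloor\le 2\lfloor\Lambda\rfloor+1\le 3\lfloor\Lambda\rfloor$. As $y(\delta(u)\cup\delta(w))=4/3$, merging two stars costs two rounds, not one.

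Your fallback does not escape this. Any bound $x(F_1\cup F_2)\le 1+\theta$ with $\theta<1$ rounds to $x(F_1\cup F_2)\le 1$ one round later, so the fallback needs the same missing estimate. If $\theta>0$ is kept instead, the charging argument only gives $(k+1+\theta)/2$.

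\textbf{The missing idea is an intermediate round of \emph{mixed} inequalities.} Fix a single $f\in F_1$ and let $s_2=|F_2|$. Average $x_f+x_g\le 1$ (valid since $f\cap g\neq\emptyset$) over $g\in F_2$, and add $\frac{s_2-1}{s_2}x(F_2)\le\frac{s_2-1}{s_2}$. This gives $x_f+x(F_2)\le 1+\frac{s_2-1}{s_2}<2$, hence $x_f+x(F_2)\le 1$ after one round. Average these over $f\in F_1$, do the same with the roles swapped, and sum: this bounds $x(F_1)+x(F_2)$ strictly below $2$, so $x(F_1\cup F_2)\le 1$ holds after a second round.

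Apply this to the stars at the $k$ vertices of any one edge of an intersecting family $K$; these vertices form a hitting set of size $k$. The result is rank at most $2\lceil\log k\rceil$. This is the paper's proof, and your ``first attempt'' of halving the vertices of $e$ is the same decomposition, just with the wrong cost per level.

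\textbf{Step 1 must use \emph{all} clique inequalities}, as the paper does by citing Chan--Lau. Restricting to families with at most $k+1$ edges fails. On the projective plane of order $3$ ($k=4$, thirteen pairwise intersecting lines, four through each point), the all-$\tfrac15$ vector satisfies every such inequality and has value $13/5>5/2$. Relatedly, in your charging argument the family $K_e$ of all edges meeting the matching only inside $e$ can be arbitrarily large. The hitting-set induction makes $|K|$ irrelevant, so the count $\lceil\log_2(k+1)\rceil$ based on $|F|\le k+1$ should be dropped.
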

On a technical level, Theorem~\ref{thm:hyper} complements Theorem~\ref{thm:main-1}:
the proof of Theorem~\ref{thm:main-1} relies on enforcing additional local constraints in the relaxation,
which imply approximate Gomory cuts,
whereas the proof of Theorem~\ref{thm:hyper} uses that Gomory cuts imply similar local constraints that then lead
to a small integrality gap.

\medskip
\noindent 
\emph{Communication complexity} and communication protocols in particular provide a way to prove existence of extended formulations.
 Yannakakis'~\cite{Yannakakis91} protocol to determine whether the \emph{clique} of player \emph{Alice} and the \emph{stable set} of player \emph{Bob} intersect can be used to construct an extended formulation of the clique relaxation for stable set, see also~\cite{rao2020communication}. In general, the size of the formulation is singly exponential in the complexity of the protocol. For example, 
If the underlying graph $G=(V,E)$ has $n$ vertices, then the complexity of Yannakakis' communication protocol is $O(\log^2 n)$ yielding an extended formulation of size $n^{O(n)}$ of the clique relaxation of stable set, see also~\cite{AprileFaenza20}.  

We generalize Yannakakis' protocol for computing the slack of a given integer point of $P$  with respect to a given valid inequality for the integer hull of $P$ with right-hand side at most $\rhs_{\max}$.
Here, $P$ is an arbitrary packing polytope. 
The result is   a deterministic, two-player communication protocol of complexity $O(\rhs_{\max} \cdot \log^2 n)$.  For deterministic protocols, a $\Omega(\rhs_{\max} \cdot \log^2 n)$ lower bound follows from Göös \emph{et al.} \cite{gpw15}. In fact, even \emph{approximating} the valid inequalities with $\rhs_{\max} = 1$ requires size $n^{\Omega(\log n)}$ in any linear extended formulation~\cite{gjpw18}.
For the purpose of approximating the Gomory closure, it suffices to consider $\rhs_{\max} = \floor{1/\eps}$.

\begin{restatable}{theorem}{ef}\label{thm:main-2}
    Let $P = \{x \in \R^n_{\ge 0} : Ax \le \mathbf 1\}$ be a polytope contained in $[0,1]^n$, where $A \in \R_{\geq 0}^{m \times n}$. For each fixed $\eps \in (0,1/2)$, there exists a polyhedral relaxation $Q$ of the integer hull of $P$, which is a $(1+\eps)^\numrounds$-approximation of $P^{(t)}$ for each $t \in \Z_{\ge 1}$ and has a linear extended formulation of size\footnote{Here and throughout the paper, $\log n$ denotes the base-$2$ logarithm of $n$.} at most
    \[
      m + n^{(2/\eps) \cdot \log n}.
    \] 
\end{restatable}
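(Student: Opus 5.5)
We take $k := \floor{1/\eps}$ and let $Q$ be the set of all $x ∈ P$ satisfying every inequality $c^\intercal x ≤ δ$ such that $c ∈ \Z^n_{\ge 0}$, $δ ∈ \{0,1,\dots,k\}$, and the inequality is valid for $P_I$. The proof splits into an approximation part and a size part.

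\emph{Approximation.} We show $(1+\eps)^{-s}Q ⊆ P^{(s)}$ by induction on $s$; the case $s=0$ is trivial. First we reduce to nonnegative cuts. The polyhedron $P$ is down-closed, so every $P^{(s)}$ is down-closed as well. For a down-closed set, $\max_{y} c^\intercal y$ equals the maximum of $(c^+)^\intercal y$ over the same set, since one may zero out the coordinates where $c$ is negative. Hence each Gomory cut $c^\intercal x ≤ \floor{δ}$ of $P^{(s)}$ is implied on $\R^n_{\ge0}$ by the cut $(c^+)^\intercal x ≤ \floor{δ}$, which is also a Gomory cut of $P^{(s)}$. So we may assume $c ≥ 0$ and $δ = \max_{P^{(s)}} c^\intercal x$.

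Now take $x ∈ (1+\eps)^{-(s+1)}Q$ and split into two cases.
\begin{itemize}
\item If $\floor{δ} ≤ k$, the cut $c^\intercal x ≤ \floor{δ}$ is valid for $P_I$, so it is among the inequalities defining $Q$. Because $Q$ is down-closed and $c ≥ 0$, $x$ satisfies it.
\item If $\floor{δ} ≥ k+1 > 1/\eps$, then $\floor{δ} ≥ δ - 1 ≥ δ/(1+\eps)$. By induction, $(1+\eps)x ∈ P^{(s)}$, so $c^\intercal x ≤ δ/(1+\eps) ≤ \floor{δ}$.
\end{itemize}
This gives $(1+\eps)^{-t}Q ⊆ P^{(t)}$ for every $t$ simultaneously. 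We also have $P_I ⊆ Q ⊆ P$, so $Q$ is a $(1+\eps)^t$-approximation of $P^{(t)}$ for each $t$.

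\emph{Size.} We write $Q = P ∩ R$, where $R$ is the polyhedron defined by the small-rhs inequalities. Intersecting with $P$ costs the additive $m$ constraints in the bound. For $R$ we use the generalized Yannakakis theorem for nested pairs $P_I ⊆ R$: an extended formulation of some polyhedron sandwiched between $P_I$ and $R$ exists, of size the nonnegative rank of the slack matrix $M$. Rows of $M$ are indexed by the pairs $(c,δ)$ above and columns by the feasible $0/1$ points $\chi^S$ of $P$, with entry $M_{(c,δ),S} = δ - c(S) ∈ \{0,\dots,k\}$. Replacing $R$ by that polyhedron preserves both the sandwich and the approximation argument. A deterministic protocol of complexity $C$ that lets the players output this slack (Alice holds $(c,δ)$, Bob holds $S$) yields a nonnegative factorization with $2^{C}$ rank-one terms, up to a factor $k+1$ for the value. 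So it suffices to compute $c(S)$, capped at $δ+1$, with $(k+1)\log^2 n + O(\log n)$ bits, which gives $n^{(2/\eps)\log n}$.

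\emph{Protocol (main obstacle).} Since $c$ is integral, nonnegative, and valid for $P_I$ with $δ ≤ k$, we have $c_i ≤ δ$, and $c(S) ≤ δ$ for every feasible $S$. Thus $c(S)$ is determined once the players identify the at most $k$ elements of $\supp(c) ∩ S$, counted with weight. The plan is to run at most $k+1$ phases. Each phase is a Yannakakis-style halving procedure that either finds one new element of $\supp(c)∩S$, or certifies that no further element exists.

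Within a phase the players maintain a candidate set $V$ of size at most $n$. Alice announces an element $i ∈ \supp(c)$ that lies in at most half of the "compatible" candidates; these play the role of a low-degree vertex of the clique. Alternatively, Bob announces an element of $S$ that is incompatible with at least half of the candidates; this plays the role of a high-degree vertex of the stable set. The compatibility relation is derived from the packing structure: two elements are incompatible if they cannot both extend the current partial intersection without $c$ exceeding $δ$, or without violating $Ax ≤ \one$. Each announcement costs $\log n$ bits and halves $V$, so a phase costs $O(\log^2 n)$.

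The delicate point, which I expect to be the main difficulty, is the invariant that replaces the clique/stable-set dichotomy. We must show that at each step either Alice or Bob can make a halving announcement, exactly as a clique and a stable set share at most one vertex. I would try to derive this from the fact that $(c,δ)$ is valid for all feasible sets containing the elements found so far. Concretely, fixing the found elements reduces the remaining instance to one with right-hand side $δ - c(\text{found})$. This should give the needed "Alice's side versus Bob's side" exclusivity for one further element.
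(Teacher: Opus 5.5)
Your approximation argument and size framework are sound and match the paper. The case split on $\floor{\rhs}$ is Lemma~\ref{lem:induction}, and taking $Q=P\cap R$ with Yannakakis applied to the nested pair $P_I\subseteq R$ is how the paper bounds the size; your care in replacing $R$ by a sandwiched polyhedron is correct. There is one small slip: $\floor{\rhs}\ge k+1$ does not give $\rhs-1\ge\rhs/(1+\eps)$ (take $\eps=0.4$, $\rhs=3.2$). Use $(1+\eps)\floor{\rhs}\ge\floor{\rhs}+1>\rhs$ instead, which holds because $\eps\floor{\rhs}>1$.

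The genuine gap is the protocol, which is the entire technical content of the theorem (Lemma~\ref{lem:small-rhs}). The single-element dichotomy you hope for is false once $\rhs\ge 3$. Write $K=\supp(c)$ and $B$ for Bob's set, and take the packing problem with constraints $x_u+x_v\le 1$ on the edges of a disjoint union of $\rhs$ cliques of size $n/\rhs$. Let Alice hold $\sum_j x_j\le\rhs$ and let $B$ be a transversal. Every element is incompatible with only $n/\rhs-1<n/2$ others and compatible with more than $n/2$. So neither of your halving announcements exists, yet $|K\cap B|=\rhs$. More fundamentally, when $\rhs\ge 2$, announcing a single $i\in K\setminus B$ never permits a sound restriction of the candidates. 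The reason is that $K\setminus\{i\}$ need not lie in any small set determined by $i$; here it contains all the other cliques. Hence ``no move available'' cannot certify ``no further element of $K\cap B$''.

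Two ideas are missing. First, announcements must be \emph{sets} of up to $\rhs$ elements. Second, the fallback outcome must be ``slack at least $1$'' rather than ``disjoint''.

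In the paper, Bob announces $S\subseteq B$ with $|S|\le\rhs$ and $\forcing(S)\ge n/2$. Here $\forcingset(S)$ is the set of coordinates that vanish in every feasible $0/1$ point with $x_S=\one$. This set is disjoint from $B$, so both players delete it. Alice announces a \emph{tight} $S\subseteq K$ (i.e.\ $c(S)=\rhs$ and $\one_S\in P$) with $\forcing(S)<n/2$. Tightness forces $K\setminus S\subseteq\forcingset(S)$, so both players restrict to that set.

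If neither move exists, they add, without communicating, all constraints $\sum_{j\in S}x_j\le|S|-1$ for $|S|\le\rhs$ with $\forcing(S)\ge n/2$. Bob's point survives, because all his small subsets have small forcing sets. Every tight subset of $K$ is cut off, so $c^\intercal x\le\rhs-1$ is valid for the new integer hull. They then recurse with $\rhs-1$ and add $1$ to the output.

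There are at most $\log n$ halvings, each of cost $O(\rhs\log n)$, and at most $\rhs$ decrements, which gives $f(n,\rhs)\le 2\rhs\log^2 n$. In the example above, Bob's whole transversal is such a set, with forcing set of size $n-\rhs$. This is exactly what single-element announcements cannot detect.
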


This result offers a different tradeoff to Theorem~\ref{thm:main-1} that is particularly interesting if
$m \gg n$.
It is not possible to completely remove the dependence on $m$ in Theorem~\ref{thm:main-2}. Indeed, suppose that $P$ is the convex hull of independent sets of a graph. Then one cannot hope to find a quasi-polynomial size extended formulation providing a $(1+\eps)$-approximation of $P' = P$. This follows by combining results from Bazzi \emph{et al.} \cite{bfps15} and Kothari \emph{et al.} \cite{kmr17}. Indeed, there are $n$-vertex graphs $G$ for which every linear extended formulation providing (say) a $2$-approximation of the maximum independent set problem in $G$ has size at least $2^{\Omega(n^\rhs)}$, for some constant $\rhs > 0$.

\section{Overview of techniques}
\label{sec:r-neighb-relax}

We walk the reader through the basic idea underlying our polyhedral relaxation  that results in a $(1+ε)$-approximation of the Gomory closure for packing problems. Once, the principle is understood, we motivate the remaining issues that need to be dealt with in order to approximate the $t$-th Gomory closure as well. To simplify the exposition, we assume that our packing polyhedron $P ⊆ [0,1]^n$ is defined by the inequalities
\begin{equation}
  \label{eq:1}
  \begin{array}{rcl}
    Ax &≤& \one \\
    x & ≥ & 0
  \end{array}, 
\end{equation}
with $A ∈ \{0,1\}^{m ×n}$.

As mentioned earlier, Mastrollili's approximation of the Gomory closure~\cite{mas20} relies on
the Lasserre/Sum-of-Squares (SoS) hierarchy.
A key ingredient in the semidefinite approach 
 is what is known as the \emph{Decomposition Theorem}, see~\cite{kmn11}:
it states that for each subset $S$ of variables, such that any feasible point in $P$ has at most $r$ many ones,
that is, $\{ x_j = 1 : j\in S \}| \le r$ for each $x\in P$, 
the $(r+1)$-th round of Lasserre perfectly captures their integer hull.
By capturing the integer hull, we mean that
a solution to Lasserre projected to $S$ is in the convex hull of integer solutions $z\in \{0,1\}^S$.\footnote{This is in fact a consequence of the Decomposition Theorem applied to packing problems. The theorem is slightly more general. We omit the details for sake of brevity.}

The decomposition theorem can be understood as a generalization of  validity of clique inequalities for the Lovász \emph{theta-body}~\cite{lovasz1979shannon}. Starting from the fractional stable set problem,  the well known semidefinite relaxation captures the fact that there can be only one vertex from a clique  $C$ that is in a stable set. The connection with the above is that here, the clique $C$ is the set $S$. However, the Chvatal-rank of a clique inequality is logarithmic in the size of the clique~\cite{ChvatalCookHartmann89}. This suggests the possibility that, in order to approximate the Gomory-closure of a packing problem, semidefinite programming might not be necessary. 


A basic principle in integer programming for the derivation of valid inequalities is based on the study and incorporation of the integer hull of subsets of variables or constraints, see, e.g.,~\cite{Gomory67,NemhauserWolsey88,conforti2014integer}. 
Indeed, we show that using only first principles, we can already achieve
guarantees closely related to the Decomposition Theorem that are sufficient for approximating the Gomory closure. Recall that  $P$ is be the polytope of fractional solutions defined by~(\ref{eq:1}). 
A simple but crucial observation is the following:
\begin{quote}
  Let $k$ be the maximal number of ones in any feasible solution to $P$.
	Then the integer hull $P_I$ of $P$ has an extended formulation of size bounded by $O(n^{k})$. 
\end{quote}
The construction is straight-forward: there are at most $ℓ≤ n^k$ integer solutions 
 $x^{(1)},\dots,x^{(ℓ)} ∈ \{0,1\}^n$ of~\eqref{eq:1}. The convex hull of integer solutions $P_I$ of \eqref{eq:1} is then described via the following extended formulation. Here  $x,y ∈ ℝ^n$ and $μ ∈ ℝ^ℓ$ are variables. 
\begin{equation}
  \label{eq:2}
  \begin{array}{lcl} 
    x& = & μ_1 x^{(1)} + \cdots + μ_ℓ x^{(ℓ)} \\
    1 &= &   μ_1  + \cdots + μ_ℓ  \\
    μ & ≥ & 0
  \end{array}
\end{equation}
The integer hull $P_I$ of~\eqref{eq:1} is the projection of the polytope defined by~\eqref{eq:2} onto the $x$-variables.
Note that this is much weaker than the Decomposition Theorem, which still implies strong properties for
subsystems even when, in general, points in $P$ can have many ones. 

\medskip
\noindent
Let $r \in \Z_{\ge 1}$ be a small parameter and $R ⊆ [m]$ be a set of $r$ row-indices.
We define
$N(R)$ as the ``neighborhood'' of $R$, the column indices $j$
with at least one non-zero coefficient $A_{ij} = 1$, $i\in R$.
Among $N(R)$ there can be only $r$ many ones for any point $x\in P$:
consider the sum of the inequalities defined by $R$, which is itself a valid inequality for $P$.
The right-hand side of the sum is $r$ and each $x_j = 1$, $j\in N(R)$, increases the left-hand side by at least $1$.
Therefore, $r$ rounds of Lasserre capture the integer hull of $N(R)$.
But, also with a polyhedral formulation we can achieve this!
Denote the \emph{integer hull} of the polytope $P_R = \{ x ∈ ℝ^{N(R)} ： A^{N(R)} x ≤ \one, \, x≥0\}$ by $Q_R$.
Here, $A^{N(R)}$ is the restriction of $A$ to columns $N(R)$.
By the construction above, $Q_R$ has an extended formulation of size $O(n^r)$.
We emphasize that $P_R$ has constraints for all rows of $A$ and not only the rows $R$. Thus, $Q_R$ is potentially
stronger than the integer hull for the polytope defined by $A$ restricted to rows $R$. This  principle of incorporating integer hulls of subsets of \emph{easy} constraints is also known as \emph{Danzig-Wolfe decomposition}~\cite{dantzig1960decomposition}. 

We say that a relaxation $Q$ is \emph{$r$-neighborhood-exact} if its projection to $N(R)$ is contained in $Q_R$ for
any set $R$ of $r$ row indices.
From our previous discussion, we can build an $r$-neighborhood exact relaxation of size $O(m^r ⋅ n^r)$.
This approach indeed recovers guarantees similar to the Decomposition Theorem, but without
semidefinite programming. 
The key difference is that we have guarantees only for the explicitly enumerated
sets of variables of the form $N(R)$, whereas
the Decomposition Theorem holds for all variable sets $S$ that can have only a bounded number of ones.
As we will show, the sets of the form $N(R)$ suffice for approximating Gomory cuts.
We note that this is not obvious from Mastrollili's proof.

The property from our $r$-neighborhood exact relaxation we are going to use is the following observation.
Let $c^\intercal x ≤δ$ be an inequality that is valid for the integer hull $P_I$ of $P$,
for example, a Gomory cut, and let $R ∈ \binom{m}{r}$. 
  Then the inequality
\begin{equation}
  \label{eq:nb}  
  ∑_{j ∈ N(R)} c_j x_j ≤ δ  
\end{equation}
is valid for $Q_R$ and hence also for an $r$-neighborhood-exact relaxation $Q$. 

\subsubsection*{Approximating the Gomory closure}

We now show that, for $r = \ln(1 + 1/ε) / ε$ an \emph{$r$-neighborhood-exact} relaxation $Q$ approximates the first Gomory closure  $P'$ in the sense that for each $x ∈ Q$ one has $x / (1+ε) ∈ P'$.
This approximation guarantee  was also achieved with the semidefinite relaxation described by Mastrolilli~\cite{mas20}.  
We need to show that 
\begin{equation}
  \label{eq:gom}
    c^\intercal x ≤ ⌊ δ ⌋ (1+ε) 
\end{equation}
is valid for $Q$ for  each  Gomory-cut $c^\intercal x ≤ ⌊ δ ⌋$ of $P$.
Recall that a packing polyhedron is \emph{downward monotone}, meaning if $x≥y≥0$ and $x ∈ P$, then $y ∈P$ as well.
We can assume that $c ∈ \Z_{\ge 0}^n$ if $⌊δ⌋≥1$:
If some component of $c$ was negative, then the inequality is dominated by the valid inequality in which this component is set to zero. 
As observed by Mastrolilli~\cite{mas20}, see also~\cite{bs24}, (\ref{eq:gom}) holds trivially for the case where $δ> 1/ε$, as~\eqref{eq:gom} is even valid for $P$. Let us therefore assume that $δ ≤ 1/ε$ holds. We also assume that the cut  $c^\intercal x  ≤ ⌊ δ ⌋$ is non-redundant and therefore has a \emph{derivation}, see, e.g.~\cite{Schrijver86,CCPS98} of the form
\begin{equation}
\label{eq:lambda} 
  ⌊λ^\intercal A ⌋ = c^\intercal , \, λ^\intercal  \one = δ, \text{ with } λ ∈ [0,1)^m. \footnote{Here, the $ ⌊⋅⌋$-operator refers to \emph{rounding down} component-wise. It captures the effect of using the lower bounds $x_i ≤ 0$ in the derivation.} 
\end{equation}
We proceed with a probabilistic (averaging) argument to show validity of \eqref{eq:gom} for $Q$. 
The vector $p = λ / \|λ\|_1 ∈ ℝ_{≥0}^m$ defines a probability distribution of the rows of the constraint matrix $A$ with $p_i$ being the probability of selecting row $i$.
We now sample  $r$-times independently  from the indices $i ∈ [m]$ with these probabilities. The result is a \emph{trial} $T = (i_1,\dots,i_r) ∈ [m]^r$. The probability of trial $T$ being
\begin{equation}
  \label{eq:ptrial}
  \Pr(T) = p_{i_1} \cdots p_{i_r} 
\end{equation}
and the sum $∑_{T ∈ [m]^r}\Pr(T) = 1$.

It follows from our discussion that for a trial $T$, the inequality
\begin{equation}
  \label{eq:trial}
  ∑_{j ∈ N(T)} c_j x_j ≤ ⌊δ⌋, 
\end{equation}
is valid for $Q$. Here $N(T)$ is the neighborhood of the set $R⊆[m]$ consisting of the  elements of  $T$.  We combine these inequalities with weights $\Pr(T)$ and since the sum of these probabilities is one, we have
\begin{equation}
  \label{eq:trial}
 ∑_{T ∈ [m]^r} \Pr(T)  ∑_{j ∈ N(T)} c_j x_j ≤ ⌊δ⌋. 
\end{equation}
By re-arranging the sum, we obtain
\begin{equation}
  \label{eq:tr-re}
  ∑_{j ∈ [n]} ∑_{\substack{T ∈ [m]^r \\ j ∈ N(T)}} \Pr(T)   c_j x_j≤ ⌊δ⌋.  
\end{equation}
Once that we establish for $j$ with $c_j ≥ 1$ 
\begin{equation}
  \label{eq:event}
  ∑_{\substack{T ∈ [m]^r \\ j ∈ N(T)}} \Pr(T) ≥ 1/ (1+ε), 
\end{equation}
\eqref{eq:gom} follows. But \eqref{eq:event} is equivalent to 
\begin{equation}
  \label{eq:c-event}
  ∑_{\substack{T ∈ [m]^r \\ j ∉ N(T)}} \Pr(T) ≤ ε / (1+ε), 
\end{equation}
which is the corresponding bound of complementary event, namely the probability that a sampled trial does not contain any of the rows of $A$ which have a $1$ in column $j$.
To conclude, let us prove \eqref{eq:c-event}. 
Denote the $j$-th column of $A$ by $A^j$.
The probability of sampling a row $i\in [m]$ with $A_{ij}=0$ is exactly $(1- λ^TA^{j}  / δ)$. 
Note that $ ⌊λ^TA^{j}⌋  = c_j$ which implies that the probability that all $r$ independent samples are such rows is at most $(1- c_j / δ)^r$. 
Since $c_j≥1$,  the probability of this event is at most
\begin{displaymath}
  (1- c_j / δ)^r ≤ (1- 1 / δ)^r ≤ e^{-1/δ ⋅ r} ≤ e^{-ε r} ≤ e^{- \ln (1 + 1/ε)} = ε / (1+ε). 
\end{displaymath}

\subsubsection*{Towards the general theorem}
We now lay out the obstacles encountered when generalizing the arguments above to a proof of Theorem~\ref{thm:main-1}
and sketch how we address them.
First, Theorem~\ref{thm:main-1} applies to general packing problems, whereas in the arguments above we
used that $A$ is a binary matrix.
With some care, one can ignore coefficients of $A$ that are very small.
We then arrive at a situation almost as in the simple case, except that columns from $N(R)$ 
have at least one coefficient in $[\tau, 1]$ for some constant $\tau$. This is sufficient to apply the same ideas.

The more challenging aspect of Theorem~\ref{thm:main-1} is approximating not only the first Gomory closure,
but also higher rank.
In Mastrolilli's proof~\cite{mas20}, this extension is straight-forward. This is because the Lasserre hierarchy can be applied inductively, in the sense that after proving that the $r$th round of Lasserre
(for some appropriate choice of $r$) approximates the first Gomory closure,
one can now roughly treat this relaxation like the original relaxation $P$ and
argue about its first Gomory closure.
Such a simple strategy does not work with our proof: the extended formulation for the $r$-neighborhood exact relaxation
is not a packing polyhedron, so we cannot apply the operation inductively on it.
We cannot apply it on the projection to the original variables either, since
this projection may be defined by a matrix with an exponential number of rows, which would dramatically affect
the size of the second iteration 
of the $r$-neighborhood exact relaxation, since we need to enumerate
all sets of $r$ rows.

Instead, we take a closer look at how a rank-$t$ Gomory cut is derived. This resembles a DAG structure,
where each rank-$i$ Gomory cut is derived from rank-$(i-1)$ Gomory cuts. While one can come up with
a natural generalization of the construction of the
probability distribution over rows to this more general derivation,
one crucial assumption no longer holds:
we previously assumed that the Gomory cut has a right-hand side of at most $1/\epsilon$. For the intermediate
Gomory cuts in the derivation DAG, this assumption can no longer be made.
We need to modify the probability distribution carefully, so that it essentially bypasses these
\emph{heavy} intermediate cuts without compromising the other important properties of the probability distribution. 


\section{Higher rank closures}

In this section, we give a formal proof of Theorem~\ref{thm:main-1} along the lines of the strategy outlined in Section~\ref{sec:r-neighb-relax}.

\subsection{Neighborhood-exact relaxations}
\label{sec:r-neighbor}

Let $A_i^\intercal x \le 1$ be the $i$th constraint and $N(i) = \{j\in [n] : A_{ij} \ge \tau\}$
be the set of indices with a coefficient of significant value. Here, $\tau$ is a small parameter we will specify later.
For a tuple of row indices $T = (i_1,\dotsc,i_r) \in [m]^r$ we write $N(T) = N(i_1) \cup \cdots \cup N(i_r)$.

Consider a convex relaxation $Q \subseteq \R^n_{\ge 0}$ of the integer hull of $P$. For a given $r \in \Z_{\geq 1}$, we say that $Q$ is \emph{$\numrows$-neighborhood-exact} if the following holds: given any inequality $\sum_{j \in [n]} c_j x_j \le \rhs$ with $c \in \R^n_{\ge 0}$, $\rhs \in \R_{\ge 0}$ that is valid for the integer hull of $P$, then for every $T \in [m]^\numrows$, the weaker inequality $\sum_{j \in N(T)} c_j x_j \le \rhs$ is valid for $Q$. 

\begin{lemma}\label{lem:r-nbh-exact-equivalence}
	Let $Q \subseteq \R^n_{\ge 0}$ be a convex relaxation of the integer hull of $P$ that is downward monotone. Then the following statements are equivalent:
\begin{enumerate}[(i)]
\item $Q$ is $\numrows$-neighborhood-exact;
\item for every $T \in [m]^\numrows$ and valid inequality $c^\intercal x \le \rhs$ for the integer hull of $P$ with $c \in \R^n_{\ge 0}, \rhs \in \R_{\ge 0}$ such that $\supp(c) \subseteq N(T)$, the inequality is also valid for $Q$;
\item for every $T \in [m]^\numrows$ and for any point $x \in Q$, the restriction of $x$ to $N(T)$ can be written as a convex combination of points in $P\cap \set{0,1}^n$, where the support of each of these points is contained in $N(T)$.
\end{enumerate}
\end{lemma}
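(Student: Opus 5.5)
I would prove the cycle (i)$\Rightarrow$(ii)$\Rightarrow$(iii)$\Rightarrow$(i). Throughout, fix $T \in [m]^\numrows$ and write $x|_{N(T)}$ for the vector obtained from $x$ by setting all coordinates outside $N(T)$ to zero. Let $K_T = \conv\set{z \in P\cap\set{0,1}^n : \supp(z)\subseteq N(T)}$. The set defining $K_T$ is finite, so $K_T$ is a polytope.

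\emph{(i)$\Rightarrow$(ii)} is immediate. If $\supp(c)\subseteq N(T)$, then $\sum_{j\in N(T)} c_jx_j = c^\intercal x$. So the weaker inequality guaranteed by (i) coincides with $c^\intercal x\le\rhs$ itself.

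\emph{(ii)$\Rightarrow$(iii)} is by separation. Suppose $x\in Q$ but $y := x|_{N(T)} \notin K_T$. First note that $y\in Q$, because $0\le y\le x$ and $Q$ is downward monotone. Since $K_T$ is a polytope, there is a hyperplane $a^\intercal z\le \beta$ valid for $K_T$ with $a^\intercal y>\beta$. We may assume $\supp(a)\subseteq N(T)$: every point of $K_T$ and $y$ itself vanish outside $N(T)$, so coordinates of $a$ outside $N(T)$ can be set to zero. The main obstacle is that (ii) only speaks about inequalities with $c\ge 0$ and $\rhs \ge 0$. To fix this, let $c := \max(a,0)$ componentwise.
\begin{itemize}
\item $K_T$ is itself downward closed among $0/1$ points, since $P$ is a packing polytope, so subsets of supports of feasible points remain feasible. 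Hence $\max_{z\in K_T} c^\intercal z = \max_{z\in K_T} a^\intercal z \le \beta$: take a maximizer for $c$ and drop the coordinates where $a_j<0$.
\item $c^\intercal y\ge a^\intercal y>\beta$ because $y\ge 0$.
\item Since $0\in K_T$, we get $\beta\ge 0$.
\end{itemize}
Now take any integer point $z$ of $P$. Then $z|_{N(T)}\in K_T$, again by downward closedness, and so $c^\intercal z = c^\intercal z|_{N(T)}\le\beta$. Therefore $c^\intercal x\le \beta$ is valid for the integer hull of $P$, with $c\ge 0$, $\beta \ge 0$ and $\supp(c)\subseteq N(T)$. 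By (ii) it is valid for $Q$, which contradicts $y\in Q$.

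\emph{(iii)$\Rightarrow$(i)}. Let $c\ge 0$ and $\rhs \ge 0$ give a valid inequality for the integer hull, and let $x\in Q$. By (iii), $x|_{N(T)}=\sum_k\mu_k z^{(k)}$ is a convex combination of integer points $z^{(k)}$ of $P$ with $\supp(z^{(k)})\subseteq N(T)$. Then
\[
\sum_{j\in N(T)}c_jx_j = c^\intercal x|_{N(T)} = \sum_k \mu_k\, c^\intercal z^{(k)} \le \rhs .
\]
This is exactly the inequality required by (i).

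The only delicate point in the whole argument is the sign reduction in (ii)$\Rightarrow$(iii), which relies on the downward monotonicity of $Q$ and on the downward closedness of $P$ on integer points.
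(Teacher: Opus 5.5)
Your proof is correct and follows the same cycle (i)$\Rightarrow$(ii)$\Rightarrow$(iii)$\Rightarrow$(i) with the same separation argument as the paper. Your replacement of the separating vector $a$ by its positive part fills a step the paper skips. That step uses the fact that the $0/1$ points of the packing polytope $P$ are closed under taking subsets, and it is needed because the paper applies (ii) to the separating inequality without checking $c \ge 0$ and $\rhs \ge 0$.

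One small correction to your closing remark: downward monotonicity of $Q$ is not actually needed. Since $\supp(c) \subseteq N(T)$, you have $c^\intercal x = c^\intercal y > \beta$, so $x$ itself already contradicts (ii). The only property of $Q$ your sign reduction uses is $Q \subseteq \R^n_{\ge 0}$, which gives $c^\intercal y \ge a^\intercal y$.
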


\begin{proof}
We will prove (i) $\Rightarrow$ (ii) $\Rightarrow$ (iii) $\Rightarrow$ (i). 

\paragraph*{(i) $\Rightarrow$ (ii).} Let $T \in [m]^\numrows$ and let $c^\intercal x \le \rhs$ be a valid inequality of the integer hull of $P$ with $\supp(x) \subseteq N(T)$. Then for any $x\in Q$, we have 
    $c^\intercal x = \sum_{j\in N(T)}c_j x_j \leq \rhs$.
\paragraph*{(ii) $\Rightarrow$ (iii).} 
	Let $T \in [m]^\numrows$ and let $x\in Q$. Denote by $x_{|N(T)}$ the projection to $N(T)$.
	Assume towards contradiction that $x_{|N(T)} \notin P'_{N(T)} := \conv\{y_{|N(T)} : y\in P \cap \{0,1\}^n \}$. Then there exists a valid inequality for $P'_{N(T)}$ which is violated by $x_{|N(T)}$. Thus, we have an inequality that is a valid inequality for $P$ and has support in $N(T)$ and but is violated by $x$, contradicting (ii).

	Thus, $x_{|N(T)}$ can be written as a convex combination of points in $\{y_{|N(T)} : y\in P \cap \{0,1\}^n\}$. We can assume that all points with a non-zero weight in the convex combination have support fully contained in
	$N(T)$.

\paragraph*{(iii) $\Rightarrow$ (i).} Let $c^\intercal x\le \rhs$ be a valid inequality of the integer hull of $P$, where $c \in \R^n_{\ge 0}$, $\rhs \in \R_{\ge 0}$. By assumption, for any $x\in Q$ and for any $T \in [m]^\numrows$ we have 
    \[
	    x_{|N(T)} = \sum_{\substack{y\in P\cap\set{0,1}^n \\ \supp(y)\subseteq N(T)}} w_y \cdot y_{|N(T)}, \;\text{where}\; w_y \in [0,1], \sum_{\substack{y\in P\cap\set{0,1}^n \\ \supp(y)\subseteq N(T)}}w_y = 1.
    \] 
	Therefore $\sum_{j \in N(T)} c_j y_j \le c^\intercal y \le \rhs$ is valid for each $y\in P\cap \{0,1\}^n$, hence also
	\[
		\sum_{j \in N(T)} c_j x_j = \sum_{j \in N(T)} c_j (x_{|N(T)})_j \le \rhs . \qedhere
	\]
\end{proof}
The next lemma shows how to construct a $\numrows$-neighborhood-exact relaxation of $P$.

\begin{lemma}\label{lem:construction-r-neighborhood-exact-relaxation}
    There exists a convex downward monotone relaxation $Q$ of the integer hull of $P$, which is $\numrows$-neighborhood-exact and is of size $O(m^rn^{1 + r/\tau})$.
\end{lemma}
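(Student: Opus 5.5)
We construct $Q$ as the intersection, over all $T \in [m]^\numrows$ (equivalently all row sets of size at most $\numrows$), of the polyhedra
\[
  Q_T := \set{x \in \R^n_{\ge 0} : x_{|N(T)} \in \conv\set{y_{|N(T)} : y \in P\cap\set{0,1}^n,\ \supp(y)\subseteq N(T)}}.
\]
That is, $x\in Q_T$ only constrains the coordinates in $N(T)$; the others are merely nonnegative. We then add the constraints $Ax\le \one$, so that $Q\subseteq P$ and $Q\subseteq[0,1]^n$.

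By Lemma~\ref{lem:r-nbh-exact-equivalence}, (iii)$\Rightarrow$(i), the set $Q$ is $\numrows$-neighborhood-exact, provided we check that it is convex and downward monotone. Convexity holds because $Q$ is an intersection of convex sets. For downward monotonicity, note that $\{y_{|N(T)}\}$ ranges over a down-closed family of $0/1$ vectors, since $P\cap\set{0,1}^n$ is down-closed. The convex hull of a down-closed set of $0/1$ points is down-closed within $\R^{N(T)}_{\ge 0}$, which follows by the standard argument of lowering one coordinate at a time and splitting each vertex with $y_j=1$ into $y$ and $y-e_j$. Each $Q_T$ contains the integer hull, since for integer $y\in P$ the restriction $y_{|N(T)}$, padded by zeros, is again a feasible integer point with support in $N(T)$. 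Hence $Q$ is a relaxation of $P_I$.

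The key counting step bounds the number of integer points with support in $N(T)$. Take $y\in P\cap\set{0,1}^n$ with $\supp(y)\subseteq N(T)$. Each $j\in\supp(y)$ lies in some $N(i_k)$, so $A_{i_k j}\ge\tau$. Summing the rows $i_1,\dots,i_\numrows$ gives
\[
  \tau\,|\supp(y)| \le \sum_{k}A_{i_k}^\intercal y \le \numrows,
\]
so $|\supp(y)|\le \numrows/\tau$. Thus there are at most $\sum_{s\le \numrows/\tau}\binom{n}{s}\le O(n^{\numrows/\tau})$ such points, or $n^{\numrows/\tau}+1$ after a little care with the constant.

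Using the formulation~\eqref{eq:2} with multipliers $\mu^T$ for each $T$, the number of variables and constraints for one $T$ is $O(n\cdot n^{\numrows/\tau})$: each point contributes one variable $\mu$ and the equations $x_j=\sum\mu_y y_j$ are $|N(T)|\le n$ in number, with up to $n^{\numrows/\tau}$ nonzeros each. The copies share the original $x$ variables. Summing over $m^\numrows$ tuples gives size $O(m^\numrows n^{1+\numrows/\tau})$, plus the $m$ constraints of $Ax\le\one$, which are absorbed into this bound.

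The main obstacle is correctly handling downward monotonicity and the restriction to supports inside $N(T)$, which is needed to invoke (iii) literally. The rest is counting.
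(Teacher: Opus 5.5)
Your proof is correct and follows the paper's construction. Like the paper, you take one convex-hull extended formulation for each $T\in[m]^\numrows$ over the $0/1$ points of $P$ supported in $N(T)$, obtain exactness from Lemma~\ref{lem:r-nbh-exact-equivalence} (iii)$\Rightarrow$(i), and bound $|\supp(y)|\le \numrows/\tau$ by summing the rows of $T$. Beyond the paper, you explicitly add $x\ge 0$ and $Ax\le\one$ and check downward monotonicity and $P_I\subseteq Q$, which the paper leaves implicit; the inclusion $Q\subseteq P$ is in fact used in the base case of the induction for Theorem~\ref{thm:main-1}, and your count $\sum_{s\le \numrows/\tau}\binom{n}{s}$ is slightly more careful than the paper's $\binom{n}{\numrows/\tau}$.
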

\begin{proof}
    We write the following extended formulation:
    
    \begin{align*}
	    \sum_{\substack{y \in P\cap \set{0,1}^n \\ \supp(y) \subseteq N(T)}} y_j \cdot w^T_y &= x_j &\quad \forall\, T \in [m]^r, \forall\, j \in N(T), \\
	    \sum_{\substack{y \in P\cap \set{0,1}^n \\ \supp(y) \subseteq N(T)}} w^T_y &= 1 &\quad \forall\, T \in [m]^r,\\
	    w^T_y &\geq 0 &\quad \forall\, T \in [m]^\numrows, \forall\, y \in P\cap \set{0,1}^n, \supp(y) \subseteq N(T).
    \end{align*}
    By Lemma \ref{lem:r-nbh-exact-equivalence}, more precisely, direction (iii) $\Rightarrow$ (i), $Q$ is $\numrows$-neighborhood-exact. 
	Let $T\in [m]^\numrows$ and $y\in P\cap \set{0,1}^n$ with $\supp(y) \subseteq N(T)$.
	By definition each column $j\in \supp(y)$ has a coefficient of at least $\tau$ in one of the
	inequalities $T$. Since the sum of the inequalities $T$ is a valid inequality for $y$, we have
	that
	\begin{equation*}
		\tau \sum_{j = 1}^n y_j = \tau \sum_{j\in N(T)} y_j \le r \, .
	\end{equation*}
	It follows that
	\begin{equation*}
		|\{y \in P\cap \set{0,1}^n, \supp(y) \subseteq N(T)\}| \le \binom{n}{r / \tau} .
	\end{equation*}
	Thus the size of the extended formulation defined above is $O(n m^r \binom{n}{r/\tau})$.
\end{proof}


\subsection{Inductive framework for approximating Gomory closures}

The following observation enables us to use induction on $\numrounds$ and to restrict to Gomory cuts with small right-hand side. A similar statement appeared before, see e.g.~Mastrolilli~\cite{mas20}. We include a proof for completeness.

\begin{lemma} \label{lem:induction}
Let $\numrounds \in \Z_{\ge 1}$ and let $Q \subseteq \R^n_{\ge 0}$ be any convex downward monotone relaxation for the integer hull of $P$ such that 
\begin{enumerate}[(i)]
\item $Q \subseteq (1+\eps)^{\numrounds-1} P^{(\numrounds-1)}$,
\item for every inequality $c^\intercal x \le \floor{\delta}$ that is valid for $P^{(\numrounds)}$ with $c \in \Z^n_{\ge 0}$ and $\floor{\delta} < 1/\eps$, the inequality $c^\intercal x \le (1+\eps)^\numrounds \floor{\delta}$ is valid for $Q$. 
\end{enumerate}
Then $Q \subseteq (1+\eps)^{\numrounds} P^{(\numrounds)}$.
\end{lemma}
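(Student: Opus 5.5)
We show that every $x \in Q$ satisfies $x/(1+\eps)^{\numrounds} \in P^{(\numrounds)}$. Since $P^{(\numrounds)}$ is the intersection of $P^{(\numrounds-1)}$ with all Gomory cuts of $P^{(\numrounds-1)}$, the plan is to check two kinds of inequalities: those defining $P^{(\numrounds-1)}$ and the Gomory cuts of $P^{(\numrounds-1)}$.

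First, $P^{(\numrounds-1)}$ itself. By (i), $x/(1+\eps)^{\numrounds-1} \in P^{(\numrounds-1)}$. Since $P \subseteq \R^n_{\ge 0}$ is downward monotone, every closure $P^{(i)}$ is also downward monotone and contains $0$. Indeed, each valid inequality for $P^{(i)}$ can be replaced by one with $c \ge 0$ that dominates it (set negative coefficients to zero), and the resulting cut inequalities define a down-monotone set. Hence dividing by a further factor $(1+\eps)\ge 1$ keeps the point inside, and $x/(1+\eps)^{\numrounds} \in P^{(\numrounds-1)}$.

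Second, a Gomory cut $c^\intercal y \le \floor{\delta}$ of $P^{(\numrounds-1)}$, where $c \in \Z^n$ and $c^\intercal y \le \delta$ is valid for $P^{(\numrounds-1)}$. By down-monotonicity we may replace $c$ by $c^+ = \max(c,0)$. The inequality $c^{+\intercal} y\le\delta$ remains valid, because the maximum of $c^{+\intercal}y$ over the down-monotone set is attained at points where negative coordinates of $c$ can be zeroed. The new cut dominates the old one on $\R^n_{\ge 0}$, so it suffices to treat $c \in \Z^n_{\ge 0}$. If $\floor{\delta} < 0$, the cut is violated at $0$, which lies in $P_I \subseteq Q$; hence $\delta \ge 0$. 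We now split on the size of $\floor{\delta}$.

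\emph{Case $\floor{\delta} < 1/\eps$.} This is exactly hypothesis (ii): the cut is valid for $P^{(\numrounds)}$, so $c^\intercal x \le (1+\eps)^{\numrounds}\floor{\delta}$, as required.

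\emph{Case $\floor{\delta} \ge 1/\eps$.} Here we use only (i): $c^\intercal x \le (1+\eps)^{\numrounds-1}\delta$. We also have $\delta < \floor{\delta}+1 \le \floor{\delta}(1+\eps)$, where the last inequality holds because $1 \le \eps\floor{\delta}$. Combining, $c^\intercal x \le (1+\eps)^{\numrounds}\floor{\delta}$.

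The main subtlety is the reduction to nonnegative integer $c$ and nonnegative right-hand side. This requires a careful argument that down-monotonicity propagates through the closures, and that replacing $c$ by $c^+$ does not increase $\delta$ beyond validity. Everything else is a direct case split.
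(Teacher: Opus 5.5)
Your proof is correct and follows the paper's argument: the same case split on whether $\floor{\delta} < 1/\eps$ (handled directly by (ii)) or $\floor{\delta} \ge 1/\eps$ (handled by (i) together with $\delta \le \floor{\delta}+1 \le (1+\eps)\floor{\delta}$). Your extra reductions are details the paper simply assumes when it takes $c \in \Z^n_{\ge 0}$ and $\delta \ge 1$, and you handle them correctly: the down-monotonicity of $P^{(\numrounds-1)}$ (established by induction over the closures), the passage to $c^+$, and $\floor{\delta} \ge 0$.
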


\begin{proof}
Consider any valid inequality $c^\intercal x \le \floor{\delta}$ for $P^{(\numrounds)}$ where $c \in \Z^n_{\ge 0}$, $\delta \ge 1$ and $c^\intercal x \le \delta$ is valid for $P^{(\numrounds-1)}$. Our goal is to show that $c^\intercal x \le (1+\eps)^{\numrounds} \floor{\delta}$ is valid for $Q$. 
	If $\floor{\delta} < 1/\eps$, this holds by (ii). Hence, we may assume that $\floor{\delta} \ge 1/\eps$. Since $c^\intercal x \le \delta$ is valid for $P^{(\numrounds-1)}$ and $Q \subseteq (1+\eps)^{\numrounds-1} P^{(\numrounds-1)}$, the inequality $c^\intercal x \le (1+\eps)^{\numrounds-1} \delta$ is valid for $Q$. 
Using that $\delta \le \floor{\delta} + 1 \le (1+\eps) \floor{\delta}$, we conclude that $c^\intercal x \le (1+\eps)^{\numrounds} \floor{\delta}$ is valid for $Q$.
\end{proof}



We can therefore restrict our attention to Gomory cuts with a small right-hand side. In the following,
we establish a sufficient condition for approximating those via the existence of a distribution
over rows with certain properties.
Let $p = (p_1,\dotsc,p_m) \in\Delta_{m-1}$ denote a probability distribution over row indices. We say that some $j \in [n]$ is \emph{covered} by some $i \in [m]$ if $j \in N(i)$. Hence, $\proba_{i \sim_p [m]} [j \in N(i)]$ is the probability that a column index $j \in [n]$ is covered by a random row index $i \in [m]$ sampled from the given distribution.

\begin{lemma} \label{lem:averaging}
	Let $Q \subseteq \R^n_{\ge 0}$ be a convex relaxation of the integer hull of $P$ that is downward monotone. Let $c^\intercal x \le \rhs$ be a valid inequality for the integer hull of $P$ where $c \in \Z^n_{\ge 0}$ and $\rhs \in \Z_{\ge 1}$. Suppose that there exists a probability distribution $p\in \Delta_{m-1}$, which satisfies
\begin{equation}\label{eq:lower-bound-distribution}
	\proba_{i \sim_p [m]} [j \in N(i)] \ge \covproba \quad \forall\,j \in \supp(c) 
\end{equation}
where $\gamma\in (0,1]$.
Let $\numrows := \ceil{1/\covproba \cdot \ln (1+1/\eps)}$. If $Q$ is $\numrows$-neighborhood-exact, then $c^\intercal x \le (1+\eps) \rhs$ is valid for $Q$.
\end{lemma}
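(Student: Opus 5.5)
We follow the averaging argument from Section~\ref{sec:r-neighb-relax}, now with an arbitrary distribution $p$. Sample a trial $T=(i_1,\dots,i_\numrows)\in[m]^\numrows$ by drawing $\numrows$ independent row indices from $p$, so that $\Pr(T)=p_{i_1}\cdots p_{i_\numrows}$ and $\sum_{T}\Pr(T)=1$. Since $c\ge 0$, $\rhs\ge 0$ and $c^\intercal x\le\rhs$ is valid for the integer hull of $P$, the definition of $\numrows$-neighborhood-exactness gives, for every trial $T$, that $\sum_{j\in N(T)}c_jx_j\le\rhs$ is valid for $Q$.

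Fix $x\in Q$ and take the convex combination of these inequalities with weights $\Pr(T)$. Exchanging the order of summation yields
\[
\sum_{j\in[n]} c_j x_j \cdot \Pr_T[j\in N(T)] \le \rhs .
\]
For $j\in\supp(c)$, the event $j\notin N(T)$ means that none of the $\numrows$ independent rows covers $j$. By \eqref{eq:lower-bound-distribution} this has probability at most $(1-\covproba)^\numrows\le e^{-\covproba\numrows}\le e^{-\ln(1+1/\eps)}=\eps/(1+\eps)$, using the choice $\numrows=\ceil{1/\covproba\cdot\ln(1+1/\eps)}$. Hence $\Pr_T[j\in N(T)]\ge 1/(1+\eps)$ for all $j\in\supp(c)$. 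The coordinates $j\notin\supp(c)$ contribute nothing because $c_j=0$.

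Since $x\ge 0$ and $c\ge 0$, every term on the left is nonnegative, so
\[
\frac{1}{1+\eps}\,c^\intercal x\le \sum_j c_jx_j\Pr_T[j\in N(T)]\le\rhs,
\]
which gives $c^\intercal x\le(1+\eps)\rhs$.

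There is no real obstacle here. The only points needing care are that $x\ge 0$ (which holds because $Q\subseteq\R^n_{\ge0}$) so that lower-bounding the coefficients is legitimate, and that trials with repeated rows are allowed because neighborhood-exactness is defined over tuples in $[m]^\numrows$. Downward monotonicity of $Q$ and integrality of $c$ and $\rhs$ are not needed for this step.
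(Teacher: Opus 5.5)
Your proposal is correct and follows the paper's proof essentially step for step: the product distribution over trials in $[m]^r$, the convex combination of the neighborhood inequalities, the exchange of summation, and the bound $(1-\gamma)^r \le e^{-\gamma r} \le \eps/(1+\eps)$ on the probability that $j \in \supp(c)$ stays uncovered. Your side remarks are also accurate: nonnegativity of $x$ and $c$ is what justifies lowering the coefficients to $1/(1+\eps)$, and neither downward monotonicity nor integrality of $c$ and $\rhs$ is used here.
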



\begin{proof}
	We define $\proba(T) := \prod_{\ell = 1}^\numrows p_{i_\ell}$ for each $\numrows$-tuple $T = (i_1,\ldots,i_\numrows)$ of row indices. Since $Q$ is $\numrows$-neighborhood-exact and $\sum_{T\in [m]^\numrows} \proba(T) = 1$, it follows that
\begin{equation}
\label{eq:conv_comb}
	\sum_{T\in [m]^\numrows} \proba(T) \left(\sum_{i \in N(T)} c_i x_i\right) \le \rhs
\end{equation}
is a convex combination of valid inequalities for $Q$. Hence, \eqref{eq:conv_comb} is a valid inequality for $Q$. The inequality can be rewritten as
\begin{equation}
\label{eq:conv_comb-rewr}
	\sum_{j = 1}^n c_j \left(\sum_{T\in [m]^\numrows : j \in N(T)} \proba(T)\right) x_j \le \rhs\,.
\end{equation}
	Fix $j \in \supp(c)$ and notice $\sum_{T\in [m]^\numrows : j \in N(T)} \proba(T)$ is the probability that at least one of $\numrows$ row indices sampled independently with probabilities $p$ covers $j \in \supp(c)$. Hence, we have
\begin{align*}
	1 - \sum_{T\in [m]^\numrows : j \in N(T)} \proba(T) &= 1 - \proba_{i_1,\ldots,i_{\numrows} \stackrel{\mathrm{i.i.d.}}{\sim_p} [m]} [j \in N((i_1,\dotsc,i_{\numrows}))]\\ 
&= \left(1 - \proba_{i \sim_p [m]} [j \in N(i)]\right)^\numrows\\
&\le (1 - \covproba)^{1/\covproba \cdot \ln (1+1/\eps)}\\
&\le \mathrm{e}^{- \ln (1+1/\eps)} = \frac{1}{1+1/\eps} = \frac{\eps}{1+\eps}\,.
\end{align*}
	Thus, for all $j \in \supp(c)$, we have
$$
	\sum_{T\in [m]^r : j \in N(T)} \proba(T) \ge 1 - \frac{\eps}{1+\eps} = \frac{1}{1+\eps}\,.
$$
which together with \eqref{eq:conv_comb-rewr} implies the inequality
$$
	\frac{1}{1+\eps} \cdot \sum_{j = 1}^n c_j x_j = \sum_{j \in \supp(c)} c_j \cdot \frac{1}{1+\eps} \cdot x_j \le \sum_{j \in \supp(c)} c_j \left(\sum_{T\in [m]^\numrows : j \in N(T)} \proba(T)\right) x_j \le \rhs\,.
$$
This concludes the proof.
\end{proof}

\begin{remark}
	Note that in statement of Lemma~\ref{lem:averaging}, we derive the validity of $c^\intercal x \le (1+\eps) \rhs$ for $Q$. However, in order to apply Lemma~\ref{lem:induction} we only need $c^\intercal x \le (1+\eps)^\numrounds \rhs$ to be valid for $Q$, in which case the size of $r$ can be improved to
	\[
	r=\ceil{\frac{1}{\gamma} \ln\rb{\frac{(1+\eps)^\numrounds}{(1+\eps)^\numrounds-1}}}.
	\]
	Since this does not improve the order of $r$, for simplicity we will stick to the value of $r$ used in Lemma~\ref{lem:averaging}.
\end{remark}

To apply Lemma~\ref{lem:averaging}, we will construct a distribution with properties as stated
in the next lemma. 

\begin{lemma}\label{lem:distribution}
	Let $c^\intercal x \le \rhs$ be a Gomory cut of rank-$\numrounds$ for $P$, where $c \in \Z^n_{\ge 0}$ and $\rhs \in \Z_{\ge 1}$. If $\rhs < 1/\eps$, then there exists a probability distribution $p \in \Delta_{m-1}$ such that for all $j \in \supp(c)$ we have
	\begin{equation*}
		\proba_{i \sim_p [m]} [j \in N(i)] \ge \eps^{\numrounds} (1-\eps)^{2\numrounds-1} - \tau \,.
	\end{equation*}
\end{lemma}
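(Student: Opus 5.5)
The plan is to induct on $\numrounds$ along the derivation of the cut. At each level the distribution over rows will be a mixture: we put weight on the original rows used in the derivation, and on the distributions that the induction hypothesis supplies for the lower-rank cuts used in it. A preliminary normalization is to assume $A_{ij}\le 1$ for all $i,j$. This is harmless as long as $e_j\in P$, since then $A_{ij}\le 1$ automatically. Columns with $e_j\notin P$ cannot carry a nonzero coefficient relevant to the integer hull, or they can be handled by truncating those entries at $1$. I will also use the fact that for every valid inequality $a^\intercal x\le\beta$ of the integer hull with $a\ge 0$ we have $a_j\le\beta$, because $e_j\in P\cap\{0,1\}^n$.

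\textbf{Base case $\numrounds=1$.} Write $c=\floor{\lambda^\intercal A}$ and $\rhs=\floor{\lambda^\intercal\one}$ with $\lambda\ge 0$, as in~\eqref{eq:lambda}, and set $p=\lambda/\|\lambda\|_1$. Fix $j$ with $c_j\ge1$. Then
\[
1\le\sum_i\lambda_iA_{ij}\le\sum_{i:A_{ij}\ge\tau}\lambda_i+\tau\|\lambda\|_1 ,
\]
where the first sum uses $A_{ij}\le 1$ and the second term collects the rows with $A_{ij}<\tau$. Hence $\Pr_{i\sim_p[m]}[j\in N(i)]\ge 1/\|\lambda\|_1-\tau$. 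Since $\|\lambda\|_1<\rhs+1\le 1/\eps+1$, this gives at least $\eps/(1+\eps)-\tau\ge\eps(1-\eps)-\tau$, which is the claim for $\numrounds=1$.

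\textbf{Inductive step.} A rank-$\numrounds$ cut comes from a nonnegative combination of rank-$(\numrounds-1)$ cuts $a_k^\intercal x\le\beta_k$, say $c=\floor{\sum_k\mu_ka_k}$ and $\rhs=\floor{\sum_k\mu_k\beta_k}$. Here I count the rows of $A$ as rank-$0$ cuts, and the nonnegativity constraints are absorbed by the rounding.

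The obstacle is the \emph{heavy} cuts, those with $\beta_k\ge 1/\eps$, to which the induction hypothesis does not apply. I would \emph{unfold} each heavy cut into its own derivation. If $a_k=\floor{\sum\nu a'}$ with $\beta_k=\floor{\sum\nu\beta'}$, then replacing $a_k$ by the unrounded $\sum\nu a'$ only increases coefficients. It also increases the right-hand side by less than $1\le\eps\beta_k$, that is, by at most a factor $(1+\eps)\le 1/(1-\eps)$. Repeating this recursively, at most $\numrounds$ levels deep, expresses $\sum_k\mu_ka_k$ as dominated by a combination $\sum_\ell\theta_\ell g_\ell$ in which every $g_\ell^\intercal x\le\gamma_\ell$ is either a row of $A$ or a light cut of rank at most $\numrounds-1$ with $\gamma_\ell<1/\eps$. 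The unfolded total right-hand side satisfies $D:=\sum_\ell\theta_\ell\gamma_\ell\le(1-\eps)^{-(\numrounds-1)}(\rhs+1)$, because only heavy cuts, whose size is relative to their own right-hand side, are unfolded. I expect the bookkeeping here to be the delicate part, and in particular showing that the loss factors multiply along DAG levels and do not compound per path. To control this I would process the derivation level by level, from rank $\numrounds-1$ down to rank $0$, and inflate the accumulated weights by at most $(1+\eps)$ per level.

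\textbf{Defining the mixture and checking coverage.} Given the unfolded combination, sample $\ell$ with probability $\theta_\ell\gamma_\ell/D$. If $g_\ell$ is a row, output it. If $g_\ell$ is a light cut, sample a row from the distribution the induction hypothesis provides for it; this distribution covers every $j\in\supp(g_\ell)$ with probability at least $q:=\eps^{\numrounds-1}(1-\eps)^{2\numrounds-3}-\tau$.

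Now fix $j$ with $c_j\ge1$. Then $\sum_\ell\theta_\ell g_{\ell j}\ge1$. Split the terms into light cuts with $g_{\ell j}\ge1$, using that integral coefficients are either $0$ or at least $1$, and rows, where as in the base case the rows with $A_{ij}<\tau$ contribute at most $\tau D$. Using $g_{\ell j}\le\gamma_\ell$, the selected mass of cuts or rows that cover $j$ is at least $(1-\tau D)/D$. The resulting coverage probability is at least
\[
\Bigl(\tfrac1D-\tau\Bigr)\cdot q \;\ge\; \frac{\eps(1-\eps)^{\numrounds}}{1+\eps}\,\bigl(\eps^{\numrounds-1}(1-\eps)^{2\numrounds-3}\bigr)-\tau ,
\]
after collecting the $\tau$ terms, which only appear additively since all probabilities are at most $1$. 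This is at least $\eps^\numrounds(1-\eps)^{2\numrounds-1}-\tau$ provided the per-level losses are tallied as two factors of $(1-\eps)$ per round. Matching exactly this exponent, and thus fixing whether the unfolding loss is charged once per level, is the step I expect to need the most care.
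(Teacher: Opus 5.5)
Your base case is fine, but the inductive step does not prove the stated bound, and the obstacle is not bookkeeping. Your induction hypothesis is a single scalar: every light sub-cut of rank at most $t-1$ is covered with probability at least $q$. At a light node of rank $r$, the normalizer $D$ can be inflated by a factor up to $(1+\eps)^{r-1}$, because heavy chains anywhere below that node get unfolded. Since $D$ is global, this inflation lowers the selection probability of every branch, including branches that never meet a heavy node. The hypothesis for the lower levels already contains their own worst-case inflation, so the losses multiply across levels. The best your recursion can give is $f_r \ge f_{r-1}\cdot \eps(1+\eps)^{-r}$, i.e. $f_t \ge \eps^t(1+\eps)^{-t(t+1)/2}$ up to $\tau$-terms. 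This exponent is quadratic in $t$, and for $t\ge 3$ it drops below $\eps^t(1-\eps)^{2t-1}$ when $\eps$ is small (and for any fixed $\eps$ once $t$ is large). Your own displayed product equals $\eps^t(1-\eps)^{3t-3}/(1+\eps)$, which is already smaller than the target for every $t\ge 2$. So your closing proviso, that the per-level losses be tallied as two factors of $(1-\eps)$ per round, is exactly the missing claim, and a scalar hypothesis cannot deliver it.

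The missing idea is to analyse one random walk on the whole derivation DAG globally, path by path, using $c^*_s=\sum_i\sum_{\pi}\lambda(\pi)A_i^\intercal$. The paper also avoids un-rounding heavy cuts, since that inflates right-hand sides. Instead it eliminates a heavy node $v$ by composing arcs $(u,v),(v,w)$ and multiplying by $\rhs_v/\sum_w\lambda(v,w)\rhs_w\in[1-\eps,1]$. This keeps every right-hand-side combination exactly, so each surviving node still has $\sum_w\tilde\lambda(u,w)\le\ceil{1/\eps}$. The $(1-\eps)$ loss is charged only to the coefficients of paths through $v$, hence at most $(1-\eps)^{t-1}$ per source-to-row path in total. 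Walking with probabilities proportional to $\tilde\lambda$ gives $p(\pi)\ge(\eps/(1+\eps))^t\tilde\lambda(\pi)$, and therefore $\sum_i p(i)A_{ij}\ge(\eps/(1+\eps))^t\,\tilde c^*_{s,j}\ge\eps^t(1-\eps)^{2t-1}c_j$. The threshold $\tau$ is then subtracted once at the end rather than at every level.

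Your right-hand-side-proportional weighting would also work, even without any unfolding, if analysed this way. Choose $w$ with probability $\lambda(u,w)\rhs_w/B_u$, where $B_u:=\sum_w\lambda(u,w)\rhs_w<\rhs_u+1$. Along a path $s=v_0,\dots,v_k=i$ the factors telescope to $p(\pi)=\lambda(\pi)\,B_{v_0}^{-1}\prod_{l=1}^{k-1}\rhs_{v_l}/B_{v_l}$. Each ratio is at least $1/2$ because $\rhs_{v_l}=\floor{B_{v_l}}\ge 1$, and $B_{v_0}<1/\eps+1$, so $p(\pi)\ge\lambda(\pi)\,\frac{\eps}{1+\eps}2^{-(t-1)}$. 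This gives an even stronger bound than the lemma.
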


\subsection{Derivation DAGs}
\label{subsec:derivation-DAG}

Towards the construction of the desired sampling probability distribution satisfying Lemma~\ref{lem:distribution}, we first model a rank-$t$ Gomory cut by a directed acyclic graph (DAG) with the Gomory multipliers being stored as the weights of the arcs of the DAG.
Let $c^\intercal x \le \rhs$ be a rank-$\numrounds$ Gomory cut for $P$, with $c \in \Z^n_{\ge 0}$ and $\rhs \in \Z_{\ge 1}$. Motivated by Lemma~\ref{lem:induction}, we assume that $\rhs < 1/\eps$. 
We will represent the cutting plane derivation of $c^\intercal x \le \rhs$ from $Ax \le \one$ as a triple $(D,d,\lambda)$, where 
\begin{itemize}
    \item $D=(V, E)$ is a DAG, where 
    \begin{itemize}
        \item The nodes $V$ of $D$ represent the different intermediate inequalities that take part in the derivation of the final Gomory cut $c^\intercal x \le \rhs$. We assume that $D$ has a unique source $s \in V$ that represents the final Gomory cut. For $v \in V$, we denote by $c_v^\intercal x \le \rhs_v$ the inequality corresponding to $v$. 
 Here, $c_v \in \Z^n_{\ge 0}$ and $\rhs_v \in \Z_{\ge 1}$ except if $c_v^\intercal x \le \rhs_v$ is one of the original inequalities from $Ax\le b$.
        \item The arcs $E$ of $D$ represent the dependency of the intermediate inequalities of the derivation. Specifically, an arc $(u,v) \in E$ represents that the inequality $c_v^\intercal x \le \rhs_v$ is used in the derivation of the inequality $c_u^\intercal x \le \rhs_u$.
    \end{itemize}
    \item $d : V \to \{0,\ldots,\numrounds\}$ is a \emph{rank} function of the nodes. Namely,
	we assume that $c_v^\intercal x \le \rhs_v$ is a Gomory cut of rank $d(v)$.
	    The rank function satisfies $d(s) = \numrounds$ and $d(u) > d(v)$ for each arc $(u,v) \in E$.
    \item $\lambda : E \to \R_{> 0}$ is a \emph{weight} function of the arcs, which encodes the Gomory multipliers in the derivation. 
\end{itemize}
Define $N^+(u):=N^+_D(u):=\set{v\in V: (u, v) \in E}$ to be the set of out-neighbors of $u$ in $D$; similarly, define $N^-(u):=N^-_D(u):=\set{v\in V: (v, u) \in E}$ to be the set of in-neighbors of $u$ in $D$.

The mechanics of a \emph{derivation DAG} $(D,d,\lambda)$ are as follows. Each rank-$0$ node corresponds to an inequality of the system $Ax \le \one$. For simplicity, we represent them by row indices, that is, we assume that $\{v \in V \mid d(v) = 0\} = [m]$. Thus, we have $c_i := A^\intercal_i$, i.e., the transpose of the $i$-th row of $A$, and $\rhs_i := 1$ for $i \in [m]$. For each $u \in V$ with $d(u) > 0$, we assume that 
$$
c_u = \left\lfloor \sum_{v \in N^{+}(u)} \lambda(u,v) c_v \right\rfloor \quad \text{and} \quad 
\rhs_u = \left\lfloor \sum_{v \in N^{+}(u)} \lambda(u,v) \rhs_v \right\rfloor\,,
$$
where the floor function is applied componentwise to compute $c_u$.

For each node $u \in V$, we define a \emph{fractional} inequality $(c^*_u)^\intercal x \le \rhs^*_u$ corresponding to $u$ as follows. 
For all $i \in [m]$, i.e., $i$ is a rank-0 vertex, let $c^*_i := c_i = A^\intercal_i$ and $\rhs^*_i := 1$. For each $u \in V$ with $0<d(u)<t$, let $c^*_u := \sum_{v \in N^{+}(u)} \lambda(u,v) c^*_v$ and $\rhs^*_u := \sum_{v \in N^{+}(u)} \lambda(u,v) \rhs^*_v$. For the source $s \in V$, let $c^* := c^*_s$ and $\rhs^* := \rhs^*_s$. 
Notice that $(c^*)^\intercal x \le \rhs^*$ is the inequality one would get if the floor functions in the derivation of $c^\intercal x \le \rhs$ are ignored. 
Since this inequality is valid for $P$, it can be directly expressed as a conic combination of the original inequalities in $Ax \le \one$, as shown in the following lemma. 

\begin{lemma} \label{lem:decomposition_into_paths}
	For a directed path $\pi$ in $D$, define its weight as $\lambda(\pi) := \prod_{(u,v) \in \pi} \lambda(u,v)$. For any two nodes $u, v \in V$, define $\mathcal{P}(u,v)$ to be the set of all directed paths in $D$ from $u$ to $v$. 
Then for every node $v \in V$, we have
$$
c^*_v = \sum_{i \in [m]} \sum_{\pi \in \mathcal{P}(v,i)} \lambda(\pi) A^\intercal_i
\quad \text{and} \quad
\rhs^*_v = \sum_{i \in [m]} \sum_{\pi \in \mathcal{P}(v,i)} \lambda(\pi)\,.
$$ 
\end{lemma}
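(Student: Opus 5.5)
We prove both identities simultaneously by induction on the rank $d(v)$. Equivalently, we induct along a reverse topological order of $D$. This works because every arc $(u,v)$ satisfies $d(u)>d(v)$, so every out-neighbor of a node has strictly smaller rank. We also use the conventions that $\mathcal{P}(v,v)$ consists only of the trivial path, of weight $\lambda(\text{empty product})=1$, and that no nontrivial path starts at a rank-$0$ node.

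\textbf{Base case.} Let $v=i\in[m]$. The rank-$0$ nodes have no out-arcs, since an arc $(i,w)$ would require $d(w)<0$. Hence $\mathcal{P}(i,i')$ is empty for $i'\neq i$, and $\mathcal{P}(i,i)$ contains only the trivial path. The right-hand sides therefore reduce to $A_i^\intercal$ and $1$. These agree with $c^*_i$ and $\rhs^*_i$ by definition.

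\textbf{Inductive step.} Let $d(v)>0$. By definition (and this applies to the source $s$ as well, since $c^*:=c^*_s$ is given by the same recursion), we have
\[
c^*_v=\sum_{w\in N^+(v)}\lambda(v,w)\,c^*_w .
\]
The induction hypothesis applies to each $w\in N^+(v)$ because $d(w)<d(v)$. Substituting it gives
\[
c^*_v=\sum_{i\in[m]}\sum_{w\in N^+(v)}\sum_{\pi'\in\mathcal{P}(w,i)}\lambda(v,w)\,\lambda(\pi')\,A_i^\intercal .
\]
We now use the bijection between $\mathcal{P}(v,i)$ and the set of pairs $(w,\pi')$ with $w\in N^+(v)$ and $\pi'\in\mathcal{P}(w,i)$. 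The bijection prepends the arc $(v,w)$ to $\pi'$. It is well defined because $v\neq i$ (as $d(v)>0$), so every $v$–$i$ path is nontrivial and has a unique first arc. Under this bijection $\lambda(\pi)=\lambda(v,w)\lambda(\pi')$, which yields the claimed formula for $c^*_v$. The identity for $\rhs^*_v$ follows by the same computation with $A_i^\intercal$ replaced by $1$.

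The only point that needs care is the source. In the setup its fractional inequality is defined separately, but that definition is the same linear recursion, so the argument covers it. I expect no real obstacle beyond this bookkeeping, together with checking that the path bijection is correct in a DAG, where paths may share nodes but are uniquely determined by their first arc and remainder.
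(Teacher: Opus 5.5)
Your proof is correct and follows essentially the same route as the paper: induction on the rank $d(v)$, with the trivial path of weight $1$ as the base case, and an inductive step that substitutes the induction hypothesis into the recursion for $c^*_v$ and regroups paths by their first arc. Your explicit first-arc bijection, the observation that rank-$0$ nodes have no out-arcs, and the remark that the source obeys the same recursion only make precise what the paper's proof leaves implicit.
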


\begin{proof}
	We perform an induction over $d(v)$. The base case $d(v)=0$ holds because $\mathcal P(v,v) = \{\pi\}$ where $\pi$ consists of only vertex $v$ and $\lambda(\pi) = 1$ is the empty product.
	Suppose that the lemma holds for every node $v\in V$ with rank $0<d(v)<d$. Consider a rank-$d$ node $u\in V$. Then we have
\begin{align*}
    c^*_u 
    &= \sum_{v \in N^{+}(u)} \lambda(u,v) c^*_v \\
    &= \sum_{v \in N^{+}(u)} \lambda(u,v) \rb{\sum_{i \in [m]} \sum_{\pi \in \mathcal{P}(v,i)} \lambda(\pi) A^\intercal_i} \\
    &= \sum_{i \in [m]} \sum_{v \in N^{+}(u)} \sum_{\pi \in \mathcal{P}(v,i)} \lambda(u,v) \lambda(\pi) A_i^T. \\
    &= \sum_{i \in [m]} \sum_{P \in \mathcal{P}(u,i)} \lambda(\pi) A^\intercal_i.
\end{align*}
The inductive proof for the equality of $\delta_u^*$ follows in the same way.
\end{proof}

A node $v \in V$ is called \emph{heavy} if $\rhs_v \ge 1/\eps$.
By our assumption, every heavy node $v \in V$ has rank $0 < d(v) < \numrounds$. Thus in $D$ no source or sink is heavy.
For any heavy node $v \in V$, we will \emph{eliminate} $v$ as follows:
\begin{itemize}
    \item First, for every $u \in N^-(v)$ and $w \in N^+(v)$, we add a new arc $a = (u,w)$ with 
    $$
    \lambda(a) :=
    \frac{\floor{\sum_{w \in N^{+}(v)} \lambda(v,w) \rhs_w}}{\sum_{w \in N^{+}(v)} \lambda(v,w) \rhs_w} \lambda(u,v) \lambda(v,w)
    = \frac{\rhs_v}{\sum_{w \in N^{+}(v)} \lambda(v,w) \rhs_w} \lambda(u,v) \lambda(v,w)\,.
    $$
    \item Second, we delete $v$ and all incident arcs. Since we assume that $\rhs_v \ge 1/\eps$, we have
    $$
    \frac{\floor{\sum_{w \in N^{+}(v)} \lambda(v,w) \rhs_w}}{\sum_{w \in N^{+}(v)} \lambda(v,w) \rhs_w} \geq \frac{\floor{\sum_{w \in N^{+}(v)} \lambda(v,w) \rhs_w}}{\floor{\sum_{w \in N^{+}(v)} \lambda(v,w) \rhs_w}+1} \ge \frac{1}{1+\eps} \ge 1-\eps\,.
    $$
\end{itemize}
We eliminate all heavy nodes from $D$ in an arbitrary order.
Note that, as we will prove in the next lemma, non-heavy nodes will never become heavy in this process.
See Figure \ref{fig:heavy-node-removal} for an illustration of the elimination process.
\begin{figure}[t]
    \centering
    \begin{subfigure}[b]{0.54\textwidth}
        \centering
        \begin{tikzpicture}[
            node/.style={circle,draw,thick,minimum size=8mm,inner sep=0pt},
            heavy/.style={circle,draw,very thick,double,minimum size=9mm,inner sep=0pt},
                        arc/.style={
                thick,
                postaction={
                    decorate,
                    decoration={
                        markings,
                        mark=at position 0.5 with {\arrow{Latex[length=2mm]}}
                    }
                }
            },
            layerlabel/.style={font=\small}
        ]
            \node at (-4,2) {$\vdots$};
            \node[layerlabel,anchor=east] at (-3.0,1) {rank-$(d+2)$};
            \node[layerlabel,anchor=east] at (-3.0,0) {rank-$(d+1)$};
            \node[layerlabel,anchor=east] at (-3.4,-1) {rank-$d$};
            \node[layerlabel,anchor=east] at (-3.0,-2) {rank-$(d-1)$};
            \node[layerlabel,anchor=east] at (-3.0,-3) {rank-$(d-2)$};
            \node at (-4,-3.5) {$\vdots$};

            \node[node] (u1) at (-2,0) {$u_1$};
            \node[node] (u2) at (-0.5,1) {$u_2$};
            \node at (0.8,0) {$\cdots$};
            \node[node] (uk) at (2,0) {$u_k$};

            \node[heavy] (v) at (0,-1) {$v$};
            \node[font=\small,anchor=west] at (0.5,-1) {heavy node};

            \node[node] (w1) at (-2.4,-3) {$w_1$};
            \node[node] (w2) at (-0.75,-3) {$w_2$};
            \node at (0.8,-2) {$\cdots$};
            \node[node] (wk) at (2.4,-2) {$w_\ell$};

            \node at (0,2) {$\vdots$};
            \node at (0,-3.5) {$\vdots$};

            \draw[arc] (u1) -- (v);
            \draw[arc] (u2) -- (v);
            \draw[arc] (uk) -- (v);

            \draw[arc] (v) -- (w1);
            \draw[arc] (v) -- (w2);
            \draw[arc] (v) -- (wk);
        \end{tikzpicture}
        \caption{Original derivation DAG with a heavy node \(v\).}
        \label{fig:heavy-node-before}
    \end{subfigure}
    \hfill
    \begin{subfigure}[b]{0.45\textwidth}
        \centering
        \begin{tikzpicture}[
            node/.style={circle,draw,thick,minimum size=8mm,inner sep=0pt},
            arc/.style={
                thick,
                postaction={
                    decorate,
                    decoration={
                        markings,
                        mark=at position 0.5 with {\arrow{Latex[length=2mm]}}
                    }
                }
            },
            layerlabel/.style={font=\small}
        ]

            \node[node] (U1) at (-2,0) {$u_1$};
            \node[node] (U2) at (-0.5,1) {$u_2$};
            \node at (0.8,0) {$\cdots$};
            \node[node] (Uk) at (2,0) {$u_k$};

            \node[node] (W1) at (-2.4,-3) {$w_1$};
            \node[node] (W2) at (-0.75,-3) {$w_2$};
            \node at (0.8,-2) {$\cdots$};
            \node[node] (Wk) at (2.4,-2) {$w_\ell$};

            \node at (0,2) {$\vdots$};
            \node at (0,-3.5) {$\vdots$};

            \foreach \U in {U1,U2,Uk} {
                \foreach \W in {W1,W2,Wk} {
                    \draw[arc] (\U) -- (\W);
                }
            }
        \end{tikzpicture}
        \caption{New derivation DAG after eliminating $v$.}
        \label{fig:heavy-node-after}
    \end{subfigure}
    \caption{Elimination of a heavy node.}
    \label{fig:heavy-node-removal}
\end{figure}
Denote by $(\tilde{D}=(\tilde{V}, \tilde{E}),\tilde{d},\tilde{\lambda})$ the derivation DAG after elimination of heavy nodes. Each arc $a \in \tilde{E}$ corresponds to a path $\pi(a)$ of $D$, whose arcs are those which were combined in the first step of the elimination process, in order to form this new arc $a$. For arcs that existed in $D$ already, the path corresponds to the original arc. Notice that
\begin{equation}
\label{eq:loss_on_arc}
\tilde{\lambda}(a) \ge (1-\eps)^{\ell(a) - 1} \lambda(\pi(a))
\end{equation}
for all $a \in \tilde{E}$, where $\ell(a)$ denotes the length of $\pi(a)$. For $v \in \tilde{V}$, we denote by $\tilde{c}_v x \le \tilde{\rhs}_v$, and $\tilde{c}^*_v \le \tilde{\rhs}_v$ the Gomory cut and fractional inequality derived by $(D,d,\lambda)$ corresponding to node $v$. 

\begin{lemma} \label{lem:loss_control}
For all nodes $v \in \tilde{V}$ with $d(v) \le 1$, we have $\tilde{c}^*_v = c^*_v$ and for all nodes $v \in \tilde{V}$ with $d(v) > 1$, we have $\tilde{c}^*_v \ge (1-\eps)^{d(v)-1}c^*_v$. Moreover, we have $\tilde{\rhs}_v = \rhs_v$ for all nodes $v \in \tilde{V}$. 
\end{lemma}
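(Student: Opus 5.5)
The plan is to argue by induction over the sequence of single-node eliminations, tracking two quantities for each surviving node: the \emph{pre-floor} right-hand side sum $\sum_{w\in N^+(u)}\lambda(u,w)\rhs_w$, and the path decomposition of the fractional inequality from Lemma~\ref{lem:decomposition_into_paths}. Parallel arcs created by an elimination are treated as merged, with their weights added; this affects neither the recursions nor the path sums.

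\textbf{Right-hand sides.} Consider one elimination of a heavy node $v$, and fix an in-neighbour $u\in N^-(v)$. Before the step, the pre-floor sum of $u$ contains the term $\lambda(u,v)\rhs_v$. After the step, this term is replaced by
\[
\sum_{w\in N^+(v)}\frac{\rhs_v}{\sum_{w'\in N^+(v)}\lambda(v,w')\rhs_{w'}}\,\lambda(u,v)\lambda(v,w)\,\rhs_w=\lambda(u,v)\rhs_v .
\]
So the pre-floor sum of every surviving node is unchanged. Processing nodes in increasing rank, induction then gives $\tilde\rhs_u=\rhs_u$ for all $u\in\tilde V$. In particular, no non-heavy node ever becomes heavy, and the remaining heavy nodes keep their values $\rhs_v$. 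The elimination rule therefore stays well defined at each later step, and the bound $\rhs_v/\sum_w\lambda(v,w)\rhs_w\ge 1-\eps$ continues to apply.

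\textbf{Fractional inequalities.} Lemma~\ref{lem:decomposition_into_paths} applies verbatim to $(\tilde D,\tilde d,\tilde\lambda)$, giving
\[
\tilde c^*_v=\sum_i\sum_{\tilde\pi\in\tilde{\mathcal P}(v,i)}\tilde\lambda(\tilde\pi)A_i^\intercal .
\]
I will set up a bijection between the paths $\tilde\pi\in\tilde{\mathcal P}(v,i)$ and the paths $\pi\in\mathcal P(v,i)$ in $D$. The map sends $\tilde\pi$ to the concatenation of the paths $\pi(a)$, $a\in\tilde\pi$. Its inverse contracts each maximal run of heavy internal nodes. This works because $v$ and the sinks $i$ are never heavy, and the successive eliminations compose consistently: an arc created by eliminating $v$ may later be combined again, and its associated path is then concatenated.

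By \eqref{eq:loss_on_arc},
\[
\tilde\lambda(\tilde\pi)\ \ge\ (1-\eps)^{\sum_a(\ell(a)-1)}\lambda(\pi)=(1-\eps)^{|\pi|-|\tilde\pi|}\lambda(\pi).
\]
Ranks decrease strictly along arcs, so $|\pi|\le d(v)$. Moreover $|\tilde\pi|\ge 1$ whenever $d(v)\ge1$, since $v\neq i$. Hence the exponent is at most $d(v)-1$. Since $A\ge0$, we can sum termwise and obtain $\tilde c^*_v\ge(1-\eps)^{d(v)-1}c^*_v$.

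\textbf{Ranks at most one.} If $d(v)\le1$, every path from $v$ goes directly to rank-$0$ nodes. Such paths have no internal node, so none is heavy, and hence the exponent is $0$. Also $\ell(a)=1$ for all arcs involved, so equality holds: $\tilde c^*_v=c^*_v$.

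The main obstacle I expect is making the path bijection and \eqref{eq:loss_on_arc} rigorous under repeated eliminations. Arcs built from previously built arcs must have $\pi(a)$ defined as the concatenation, and one must verify that the loss factor accumulates exactly once per eliminated internal node. I would handle this by a single induction on the number of eliminations, maintaining the invariant
\[
\sum_{\tilde\pi\mapsto\pi}\tilde\lambda(\tilde\pi)\ \ge\ (1-\eps)^{\#\{\text{eliminated internal nodes of }\pi\}}\lambda(\pi)\qquad\text{for each path }\pi\in\mathcal P(v,i)\text{ in }D .
\]
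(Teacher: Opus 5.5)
Your proposal is correct and is essentially the paper's proof. The right-hand sides are handled by noting that eliminating a heavy node replaces $\lambda(u,v)\rhs_v$ by an equal sum. The coefficient bound comes from concatenating the paths $\pi(a)$ and applying \eqref{eq:loss_on_arc} with $|\pi|\le d(v)$ and $|\tilde\pi|\ge 1$ together with Lemma~\ref{lem:decomposition_into_paths}; you add the surjectivity of the path correspondence and the well-definedness of later eliminations, which the paper leaves implicit.

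One small consistency fix: the bijection and a single-valued $\pi(a)$ need parallel arcs kept as a multigraph rather than merged. So run the argument unmerged, and only afterwards note that merging changes neither $\tilde c^*_v$ nor $\tilde\rhs_v$.
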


\begin{proof}
The statement concerning nodes $v \in \tilde{V}$ with $d(v) \le 1$ is straightforward since no descendant of $v$ in $D$ got eliminated. Now let $v \in \tilde{V}$ have $d(v) > 1$. For every path $\tilde{\pi}$ in $\tilde{D}$, we have a corresponding path $\pi$ in $D$ obtained by concatenating the paths $\pi(a)$ for all arcs $a$ in $\tilde{\pi}$. By \eqref{eq:loss_on_arc}, we have $\tilde{\lambda}(\tilde{\pi}) \ge (1-\eps)^{d(v)-1} \lambda(\pi)$. The first part of the lemma follows from Lemma~\ref{lem:decomposition_into_paths}.

In order to establish the second part, it suffices to consider the case where a single heavy node $v_0 \in V$ gets eliminated. Let $u \in N^-(v_0)$. When we compare the expressions for $\rhs_u$ and $\tilde{\rhs}_u$, we see that the only difference is that the term $\lambda(u,v_0) \rhs_{v_0}$ in $\rhs_u$ gets replaced with $\sum_{w \in N^{+}(v_0)} \tilde{\lambda}(u,w) \rhs_w$ in $\tilde{\rhs}_u$ (all out-neighborhoods are computed in $D$). Now,
$$
\sum_{w \in N^{+}(v_0)} \tilde{\lambda}(u,w) \rhs_w
= 
\sum_{w \in N^{+}(v_0)}  
\frac{\rhs_{v_0}}{\sum_{w \in N^{+}(v_0)} \lambda(v_0,w) \rhs_w}
\lambda(u,v_0) \lambda(v_0,w) \rhs_w
=
\lambda(u,v_0) \rhs_{v_0}\,.
$$
Hence, $\rhs_u = \tilde{\rhs}_u$ for all predecessors of $v_0$, which implies $\rhs_v = \tilde{\rhs}_v$ for all $v \in \tilde{V}$.
\end{proof}

\subsection{Construction of the row distribution from the derivation DAG}

With the machinery introduced in Section \ref{subsec:derivation-DAG}, we are now ready to prove Lemma \ref{lem:distribution}.

    Let $c^\intercal x \le \rhs$ be a Gomory cut of rank-$t$ for $P$, with $c \in \Z^n_{\ge 0}$ and $\rhs \in \Z_{\ge 1}$. Assume that $\rhs < 1/\eps$. Following Section \ref{subsec:derivation-DAG}, we construct the derivation DAG of $c^\intercal x \le \rhs$ and eliminate the heavy nodes to get the modified derivation DAG $\tilde{D}=(\tilde{V}, \tilde{E})$, which will be used throughout the proof.
    
    Consider the following process for sampling a row index $i \in [m]$. We start at the source node $s \in \tilde{V}$ and we move from node to node following the arcs of $\tilde{D}$. 
    For each $u \in \tilde{V}$ and one of its out-neighbor $v \in N_{\tilde{D}}^+(u)$, define
    $$
    p(u,v) := \frac{\tilde{\lambda}(u,v)}{\sum_{w \in N_{\tilde{D}}^+(u)} \tilde{\lambda}(u,w)}
    $$
    which is the probability to follow the arc $(u,v)\in \tilde{E}$ when based at $u$. 
    For all $v \in \tilde{V}$ we have $1 \le \tilde{\rhs}_v \le 1/\eps$, thus for all $v \in \tilde{V}$ with $d(v) > 0$ we have
    $$
    \sum_{w \in N_{\tilde{D}}^+(v)} \tilde{\lambda}(v,w) \le \left\lceil\frac{1}{\eps}\right\rceil \le \frac{1+\eps}{\eps}.
    $$ 
	For every $i \in [m]$ and every directed path $\pi \in \mathcal{P}(s,i)$ we set $p(\pi):= \prod_{(u,v)\in \pi}p(u,v)$. Then, 
    \begin{equation*}
        p(\pi) \ge \left(\frac{\eps}{1+\eps}\right)^{\numrounds} \tilde{\lambda}(\pi).
    \end{equation*} 
    Finally, define 
    \begin{equation}\label{eq:sampling-prob-rank-t}
        p(i) := \sum_{\pi \in \mathcal{P}_{\tilde{D}}(s,i)} p(\pi),
    \end{equation}
    which is the probability that a row $i \in [m]$ is sampled. For each fixed $j \in [n]$, we have
    \begin{align*}
	    \proba_{i \sim_p [m]} [j \in N(i)] &=
    \sum_{i \in [m]} A_{ij} p(i) \\
	    &=\sum_{i \in [m]} \sum_{\pi \in \mathcal{P}_{\tilde{D}}(s,i)} A_{ij} p(\pi)\\
    &\ge \sum_{i \in [m]} \sum_{\pi \in \mathcal{P}_{\tilde{D}}(s,i)} A_{ij} \left(\frac{\eps}{1+\eps}\right)^{\numrounds} \tilde{\lambda}(\pi)\\
    &\ge \left(\frac{\eps}{1+\eps}\right)^{\numrounds} (1-\eps)^{\numrounds-1} c^*_j\\
    &\ge \eps^\numrounds (1-\eps)^{2\numrounds-1} c_j\,.
    \end{align*}
    Therefore for the coverage probability of $j\in \supp(c)$, we have
    \begin{align*}
    \sum_{i: j \in N(i)} p(i) &\ge \sum_{i: j \in N(i)} A_{ij} p(i) \\
    &= \sum_{i \in [m]} A_{ij} p(i) - \sum_{i \in [m] \atop A_{ij} < \threshold} A_{ij} p(i)\\
    &\ge \eps^\numrounds (1-\eps)^{2\numrounds-1} c_j - \sum_{i \in [m]} p(i) \cdot \tau \\
    &\ge \eps^\numrounds (1-\eps)^{2\numrounds-1} - \tau \,. \qedhere
    \end{align*}

\subsection{Proof of Theorem~\ref{thm:main-1}}

We restate Theorem~\ref{thm:main-1} here for convenience.
\approxcg*
\begin{proof}
    
    
	We can assume without loss of generality that $A\in [0,1]^{m\times n}$. Every variable
	with a coefficient greater than $1$ in some row cannot be $1$ and can therefore be removed.
    Construct the relaxation $Q \subseteq \R^n_{\geq 0}$ of the integer hull of $P$ as in Lemma \ref{lem:construction-r-neighborhood-exact-relaxation} with the parameters
    \begin{equation*}
	    \gamma = \threshold = \frac{1}{2} \eps^{\numrounds} (1-\eps)^{2\numrounds-1} \quad \text{ and }
    \quad \numrows = \ceil{1/\covproba \cdot \ln (1+1/\eps)} \,.
    \end{equation*}
	The relaxation $Q$ is $r$-neighborhood-exact and is of size
	\begin{equation*}
		O(m^r n^{1 + r/\tau}) \le (mn)^{(1/\eps)^{O(t)}} \, .
	\end{equation*}
	We will now show that $Q$ gives a $(1+\eps)^\numrounds$-approximation of $P^{(\numrounds)}$ by induction on $t$. The base case $t=0$ holds because of $P^{(0)} = P$. For the induction step, assume that $Q\subseteq (1+\eps)^{t-1}P^{(t-1)}$, and consider an inequality $c^\intercal x \le \floor{\delta}$ that is valid for $P^{(\numrounds)}$ with $c \in \Z^n_{\ge 0}$ and $\floor{\delta} < 1/\eps$.
	By Lemma~\ref{lem:distribution} there is a distribution $p\in \Delta_{m-1}$ with
	\begin{equation*}
		\proba_{i \sim_p [m]} [j \in N(i)] \ge \eps^{\numrounds} (1-\eps)^{2\numrounds-1} - \tau = \gamma \,.
	\end{equation*}
	Thus, by Lemma~\ref{lem:averaging} it follows that $c^\intercal x \le (1+\eps)^\numrounds \floor{\delta}$ is valid for $Q$. From Lemma~\ref{lem:induction} it follows that $Q\subseteq (1 + \eps)^t P^{(t)}$.
\end{proof}

  \section{Hypergraph matching}\label{sec:hypergraph-matching}
Given a hypergraph $G = (V, E)$, the maximum matching problem on $G$ is to find a maximum cardinality subset $E^\prime \subseteq E$ of hyperedges so that each vertex $v\in V$ is incident to at most one hyperedge $e \in E^\prime$. 
For the unweighted maximum matching problem in hypergraph, we have the following standard LP relaxation:
    \begin{align*}
      \max & \quad \sum_{e\in E} x_e \\
      \text{s.t.}
      & \quad \sum_{e\in\delta(v)} x_e \leq 1, \forall \, v\in V,\\
      & \quad x_e \geq 0, \forall \, e\in E.
    \end{align*}
A hypergraph $G = (V, E)$ is called $k$-uniform if $|e|=k$ for each $e \in E$. In this section, we study the (unweighted) maximum matching problem for $k$-uniform hypergraphs and show the following.

\hyper*

\begin{proof}
    It was shown in \cite{cl12}, that the following LP has integrality gap of at most $(k+1)/2$:
    \begin{align*}
      \max & \quad \sum_{e\in E} x_e \\
      \text{s.t.}
      & \quad \sum_{e\in\delta(v)} x_e \leq 1, \forall \, v\in V, \\
      & \quad \sum_{e\in K} x_e \leq 1, \forall\;\text{intersecting family}\; K\subseteq E, \\
      & \quad x_e \geq 0, \forall \, e\in E.
    \end{align*}
    Let $K \subseteq E$ be an intersecting family, that is, a family of hyperedges with pairwise intersection.
    We claim that the rank of the clique inequality $x(K):=\sum_{e\in K} x_e\leq 1$ is $2+2\log r$, provided that $K$ has a hitting set of size $r$.
	By relabelling, assume that the vertices $\{1,2,\dotsc,r\} \subseteq V$ form a hitting set of $K$. Let $K_i$, $i\in\{1,2,\dotsc,r\}$, denote the edges in $K$ that contain vertex $i$.
	Then $K_1\cup \cdots\cup K_r = K$. Note that $x(K_i)\leq 1$ is valid for $P$ for each $i = 1,\dots, r$. Set $k_i := |K_i|$.
    Consider $K_1$ and $K_2$ (assume that both are non-empty). Since $K$ is a clique, we have that $x_i + x_j\leq 1$ is valid for $P$, for each $i \in K_1, j \in K_2$. 
    Hence for any fixed $i \in K_1$, we have that 
    \[x_i + \frac{1}{k_2} x(K_2) \leq 1\] 
    is valid for $P$. Also, 
    \[\frac{k_2-1}{k_2} x(K_2) \leq \frac{k_2-1}{k_2}\] 
    is valid for $P$. Hence by adding both inequalities we get that 
    \[x_i + x(K_2) \leq 1 + \frac{k_2-1}{k_2}\] 
    is valid for $P$. Thus, $x_i + x(K_2) \leq 1$ is valid for the first closure $P^\prime$ for every $i \in K_1$. 
    Similarly, $x(K_1) + x_j \leq 1$ is valid for $P^\prime$, for every $j \in K_2$. 
    Now we can average these inequalities to see that 
    \begin{equation*}
        \frac{1}{k_1} x(K_1) + x(K_2) \leq 1\quad \text{and} \quad x(K_1) + \frac{1}{k_2} x(K_2) \leq 1
    \end{equation*} 
    are valid for $P^\prime$. Without loss of generality, we may assume that $k_1\leq k_2$, hence by summing the two inequalities above and scaling by $k_2/(k_2+1)$ we have
    \[
	    \frac{k_1+1}{k_1}\cdot \frac{k_2}{k_2+1} x(K_1) + x(K_2) \le \frac{k_2}{k_2+1} x(K_1) + \left(\frac{1}{k_1} x(K_1) + x(K_2)\right) \leq \frac{k_2}{k_2+1} + 1 < 2 \, ,
    \]
    which implies that $x(K_1) + x(K_2) \leq 1$ is valid for the second closure $P^{\prime\prime}$.
    This means that after two rounds of Gomory-closure operation, we can ``merge'' every two consecutive cliques in $K_1, \cdots, K_r$ into a single clique. 
    By repeatedly merging, we get a bound of $2 \ceil{\log r} \leq 2+2\log r$ on the rank of $x(K) \leq 1$.

    By taking the $k$ vertices of one hyperedge in $K$ we can see that $K$ always has a hitting set of size $k$.
	Thus, we can take $r=k$. The claim above implies that $x(K)\leq 1$ can be captured in $O(\log k)$ rounds of Gomory-closure operation. This completes the proof.
      \end{proof}


\section{Extended formulation from communication complexity}
\label{sec:depth-independent}
Towards Theorem~\ref{thm:main-2}, the main technical challenge is to prove the following lemma about capturing all valid inequalities for the integer hull with small right-hand sides. 

\begin{lemma} \label{lem:small-rhs}
Let $P = \{x \in \R^n_{\ge 0} : Ax \le b\}$ be a polytope contained in $[0,1]^n$, where $A \in \R_{\ge 0}^{m \times n}$ and $b \in \R_{\ge 0}^m$. Let $\rhs_{\max} \in \Z_{\ge 1}$. There exists a polyhedral relaxation $R$ of the integer hull of $P$ such that $R$ has a size-$n^{2
\rhs_{\max} \log n}$ extended formulation and every inequality $c^\intercal x \le \rhs$ with integer coefficients and $\rhs \le \rhs_{\max}$ that is valid for the integer hull of $P$ is also valid for $R$.
\end{lemma}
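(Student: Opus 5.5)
We will use the standard link between extended formulations and nonnegative factorizations of slack matrices. Yannakakis' theorem turns a deterministic protocol of complexity $k$ for a nonnegative matrix into a nonnegative factorization, hence an extended formulation, of size $2^{k}$. Concretely, let $X = P\cap\{0,1\}^n$ be the set of feasible $0/1$ points. Let $\mathcal{C}$ be the set of pairs $(c,\rhs)$ with $c\in\Z^n_{\ge 0}$ (nonnegativity is without loss of generality by downward monotonicity), $\rhs\le\rhs_{\max}$, and $c^\intercal x\le \rhs$ valid for $\conv(X)$. Consider the slack matrix $S$ with rows indexed by $(c,\rhs)\in\mathcal{C}$, columns indexed by $x\in X$, and entries $S_{(c,\rhs),x}=\rhs-c^\intercal x\in\{0,\dots,\rhs_{\max}\}$.

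If $S=UV$ is a nonnegative factorization with inner dimension $\ell$, then
\[
R=\{x : \exists\, y\in\R^\ell_{\ge 0},\ Vy \text{ reproduces } x \text{ on } X,\ \dots\}
\]
is obtained by the standard construction, namely $R=\{x : \exists y\ge 0:\ \rhs - c^\intercal x = U_{(c,\rhs)}y\ \forall (c,\rhs)\in\mathcal{C}\}$. More economically, we follow the Yannakakis/Faenza et al.\ route. Take $R=\{x\in\R^n:\exists y\ge0,\ Ex+Fy=g\}$ as the set of points for which every inequality in $\mathcal{C}$ has slack expressed as $\sum_{\text{leaves }L} y_L\,[\text{rectangle}]$. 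This gives a relaxation of $\conv(X)$ of size about the number of leaves plus $n$, on which every inequality in $\mathcal{C}$ is valid. So it suffices to give a deterministic protocol that computes $\rhs-c^\intercal x$ (equivalently, decomposes it into a sum of nonnegative rectangle-supported terms) with $O(\rhs_{\max}\log^2 n)$ bits. To be precise about the leaf count, the protocol should have at most $n^{2\rhs_{\max}\log n}$ leaves.

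\textbf{The protocol.} Alice holds $(c,\rhs)$ and Bob holds $x\in X$. The slack is $\rhs-\sum_{j\in\supp(x)}c_j$. Because $c\ge 0$ is integral and the slack is nonnegative, at most $\rhs\le\rhs_{\max}$ indices $j\in\supp(x)$ have $c_j\ge1$. The slack is therefore determined once Alice and Bob agree on the set $J=\{j\in\supp(x): c_j\ge1\}$, which has size at most $\rhs_{\max}$; Alice then knows $\rhs-c(J)$. To find $J$ we repeat a generalized Yannakakis clique/stable-set search $\rhs_{\max}$ times, each time finding one new element of $J$ or certifying that none remains.

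In Yannakakis' setting, Alice's clique $C$ and Bob's stable set $S$ intersect in at most one vertex. The analogue here is Alice's set $C_A=\supp(c)$ and Bob's set $S_B=\supp(x)$. The key structural fact replacing "clique meets stable set in $\le1$" is the following: for any $j\in C_A\setminus J$, the point $x+e_j$ restricted appropriately is infeasible or raises $c^\intercal x$ beyond $\rhs$. We then run the halving argument. In each round Alice announces a vertex $j\in C_A$ (with $\log n$ bits) that lies in few Bob-compatible candidates, or Bob announces a $j\in S_B$ that is compatible with few of Alice's remaining elements. The candidate universe halves each time, so $O(\log n)$ rounds of $O(\log n)$ bits either identify an element of $J$ or rule out all remaining ones. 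Summed over $\rhs_{\max}$ elements this gives $O(\rhs_{\max}\log^2 n)$ bits, i.e.\ $n^{O(\rhs_{\max}\log n)}$ leaves, and we will track constants to reach $2\rhs_{\max}\log n$ in the exponent.

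\textbf{Main obstacle.} The hardest step is the halving invariant. In Yannakakis' protocol, a vertex in $C$ has few non-neighbours, or a vertex in $S$ has few neighbours, among the other player's candidates. Here "adjacency" has to be replaced by a relation on coordinates that is meaningful for general packing constraints and weighted $c$. Our first attempt will define $j\sim j'$ if $e_j+e_{j'}\notin P$, and we will argue directly through the validity of $c^\intercal x\le\rhs$ on $X$, using downward closure of $X$. A more robust fallback is to let Alice send the indices of $J$ found so far and to condition on them: the residual inequality $c^\intercal x\le \rhs-c(J_{\text{found}})$ is again valid for the downward-closed family $\{x\in X: x\supseteq J_{\text{found}}\}$. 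We then apply an $\rhs=1$ style search, where at most one remaining coordinate of $\supp(x)$ can lie in $\supp(c)$ once the residual right-hand side is exhausted. Each such stage is handled by the Yannakakis argument on the conflict graph, which is where $\log^2 n$ per stage comes from.
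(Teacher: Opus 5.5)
Your reduction to a deterministic protocol computing the slack $\delta-c^\intercal z$, followed by Yannakakis' factorization theorem, is the same as the paper's. The gap is in the protocol itself, at exactly the step you flag as the main obstacle.

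Take the pairwise conflict relation $j\sim j' \Leftrightarrow e_j+e_{j'}\notin P$. Bob's halving move is sound, since $\supp(z)$ is stable by downward monotonicity. Alice's move is not. For $\delta\ge 2$ the set $\supp(c)$ is not a clique, so after Alice announces $j\in\supp(c)$ with few conflicts, restricting to the conflict neighbourhood of $j$ can discard other elements of $\supp(c)\cap\supp(z)$. For example, with $P=[0,1]^n$, $c=e_1+e_2$, $\delta=2$, the conflict graph is empty.

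The ``key structural fact'' you state is also false in general: $z+e_j$ can be feasible with $c^\intercal(z+e_j)\le\delta$ whenever the slack is at least $c_j$. Your fallback, conditioning on already-found elements of $J$, only reduces to clique versus stable set once the residual right-hand side is $1$. It gives no way to find the first element of $J$, or to certify $J=\emptyset$, while the residual right-hand side is still at least $2$, and that is the whole difficulty. So the claimed $O(\log^2 n)$ bits per element of $J$ is unsupported.

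The paper resolves this with two ideas that are missing from your plan.

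\emph{Sets and forcing sets instead of vertices and neighbourhoods.} Bob announces $S\subseteq\supp(z)$ with $|S|\le\delta$ and $\forcing(S)\ge n/2$; both contract $S\cap\supp(c)$ and delete $\forcingset(S)$. Alice announces a \emph{tight} set $S\subseteq\supp(c)$, i.e.\ $\sum_{j\in S}c_j=\delta$ and $\one_S\in P$, with $\forcing(S)<n/2$; both contract $S\cap\supp(z)$ and restrict to $\forcingset(S)$. This is sound because tightness gives $\supp(c)\setminus S\subseteq\forcingset(S)$, which is the correct substitute for the clique property.

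\emph{Lowering the right-hand side instead of deciding disjointness.} When neither move is available, the protocol infers only that the slack is at least $1$: a tight subset of $\supp(c)$ inside $\supp(z)$ would need forcing number both $<n/2$ and $\ge n/2$. It then adds every constraint $\sum_{j\in S}x_j\le|S|-1$ with $|S|\le\delta$ and $\forcing(S)\ge n/2$, giving a polytope $\tilde P$. This $\tilde P$ still contains $z$, and $c^\intercal x\le\delta-1$ is valid for its integer hull. The protocol recurses with right-hand side $\delta-1$ at a cost of two bits.

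Since $n$ is halved at most $\log n$ times overall, and each halving costs $O(\delta\log n)$ bits, the total is $O(\delta_{\max}\log^2 n)$.
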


\begin{proof}
We provide a deterministic communication protocol with complexity $O(\rhs_{\max} \log^2 n)$ which solves the following communication problem:
There are two players Alice and Bob. Alice is given an inequality $c^\intercal x \le \rhs$ that is valid for the integer hull of $P$, where $c \in \Z^n_{\ge 0}$, $\rhs \in \{1,\ldots,\rhs_{\max}\}$. We write $K:=\supp(c)$. Bob is given a feasible integer solution $z \in P \cap \Z^n = P \cap \{0,1\}^n$. We write $B:=\supp(z)$. Their task is to compute the \emph{slack} $\rhs - c^\intercal z \in \{0,\ldots,\rhs\}$ by exchanging as few bits as possible. In this context, they do so deterministically. The number of bits exchanged in the worst case is the \emph{complexity} of the protocol. Each player knows $P$ and their own input, but does not know the input of the other player. The players are otherwise not assume to be computationally bounded.

By known results in~\cite{yan88}, which connect communication protocols and extended formulations, our protocol gives an extended formulation for $Q$ of size $2^{O(\rhs_{\max} \log^2 n)} = n^{O(\rhs_{\max} \log n)}$. In fact, our protocol generalizes Yannakakis' protocol~\cite{yan88} for the clique versus stable set problem, to any packing problem and any right-hand side.

For any set $S \subseteq [n]$ with $|S| \le \rhs$, we define the $\forcingset(S)$ to be the set of all indices $j' \in [n]$ such that $x_{j’} = 0$ in each feasible integer solution $x$ such that $x_j = 1$ for all $j \in S$. 
Define $\forcing(S)$ to be the size of its forcing set. We call a set $S \subseteq K$ \emph{tight} if $\sum_{j \in S} c_j = \rhs$ and $\mathbf 1_S \in P$. Notice that tight sets have at most $\rhs$ elements, since $c_j \ge 1$ for all $j\in K$.

In the protocol, Alice and Bob try to apply a number of simplifying rules in the given order until one of them succeeds. Then
they start again at the first rule.
In this process, they may identify elements $j\in K\cap B$, which they then contract (set to $1$).
This means they both remove it from the instance, Alice reduces $\rhs$ by $c_j$ and they compute the slack in
the remainder instance, which they then output for the overall instance as well.

\paragraph{Rule 1.} If $n = 0$ or $\rhs = 0$ then the protocol stops and Alice outputs $\rhs$. 

\paragraph{Rule 2.} Bob looks for a set $S \subseteq B$ of size at most $\rhs$ with $\forcing(S) \ge n/2$. 
If he finds such a set, he informs Alice of this and sends all elements in $S$ one after the other.
After each element $j$, Alice tells Bob whether $j\in K$.
Then, they both contract (set to $1$) the variables $x_j$ for $j \in K \cap S$ and delete (set to $0$) the variables $x_j$ for $j \in \forcingset(S)$, and recurse on a smaller problem with at most $n/2$ variables and right-hand side at most $\rhs$. This is safe since $\forcingset(S) \cap K = \emptyset$. 
If Bob fails to find such a set $S \subseteq K$ with $\forcing(S) \ge n/2$, he informs Alice and the protocol continues to the next rule.

\paragraph{Rule 3.} Alice looks for a tight set $S \subseteq K$ with $\forcing(S) < n/2$. Recall that such a set satisfies $1 \le |S| \le \rhs$. If she can find such a set, we proceed similar to before: she sends to Bob all the elements $j\in S$ and Bob tells Alice whether $j \in B \cap S$ or not. Afterwards, both contract the variables $x_j$ for $j \in B \cap S$ and delete the variables $x_j$ for $j \notin \forcingset(S)$, and recurse on a smaller problem with at most $n/2$ variables and right-hand side at most $\rhs$. This is again safe since $K \subseteq \forcingset(S)$. If Alice fails to find a tight set $S \subseteq B$ with $\forcing(S) < n/2$, she informs Bob of this and they continue the protocol.

Suppose that Rules 1-3 have been applied until exhaustion. Then the following properties hold:
\begin{enumerate}[(P1)]
\item every $S \subseteq B$ with $|S| \le \rhs$ has $\forcing(S) < n/2$;
\item every tight set $S \subseteq K$ has $\forcing(S) \ge n/2$.
\end{enumerate}

\paragraph{Rule 4.} If none of the players is able to find a set $S$ contained in their respective set which allows them to make progress, they add to the inequalities defining $P$ all the inequalities of the form $\sum_{j \in S} x_j \le |S| - 1$ where $S \subseteq [n]$ has size at most $\rhs$ and $\forcing(S) \ge n/2$. They can do so without communicating. Let $\tilde{P} \subseteq P$ denote the resulting polytope. By (P1), $z$ is an integer point of $\tilde{P}$. By (P2), $c^\intercal x \le \rhs - 1$ is valid for the integer hull of $\tilde{P}$: indeed, otherwise there would exist an integer solution $y \in \tilde{P} \cap \{0,1\}^n$ with $c^\intercal y > \rhs - 1$. Since $\tilde{P} \subseteq P$, we have $y \in P \cap \{0,1\}^n$, implying that $c^\intercal y \le \rhs$ and therefore $c^\intercal y = \rhs$. Let $S := K \cap \supp(y)$. By (P2), we have $\forcing(S) \ge n/2$. Hence the constraint $\sum_{j \in S} x_j \le |S| - 1$ is one of the constraints defining $\tilde{P}$, however it is violated by $y$ since $y_j = 1$ for all $j \in S$, this contradicts our assumption that $y \in \tilde{P}$. The players can recurse on $\tilde{P}$ with inputs $c^\intercal x \le \rhs - 1$ (for Alice) and $z$ (for Bob). To the computed slack, they then add $1$ and output it.

Let $f(n,\rhs)$ denote the number of bits exchanged by the protocol above. Then $f(n,\rhs) = 0$ when $n = 0$ or $\rhs = 0$, and $f(1,\rhs) = 1$. Otherwise, we have
\begin{align*}
f(n,\rhs) \le \max \{ &1 + \rhs (\ceil{\log n}+1) + f(\floor{n/2},\rhs),\\ 
       &1 + 1 + f(n,\rhs-1)\}.
\end{align*} 
%
We infer from this that $f(n,\rhs) \le 2 \rhs \log^2 n$ for $n \ge 2$ and $\rhs \ge 1$.
\end{proof}

We are now ready to prove Theorem~\ref{thm:main-2}, which we restate here for the convenience.

\ef*

\begin{proof}
	Let $Q := P \cap R$, where $R$ is the relaxation from Lemma~\ref{lem:small-rhs} with $\delta_{\max} = \lfloor 1/\eps \rfloor$. We prove $Q \subseteq (1+\eps)^{\numrounds} P^{(
\numrounds)}$, by induction on $t \in \Z_{\ge 0}$. 

For $t = 0$, we have $Q \subseteq P = (1+\eps)^0 P^{(0)}$.

Now assume that $\numrounds \ge 1$, and invoke Lemma~\ref{lem:induction}. By induction, we know that $Q \subseteq (1+\eps)^{\numrounds-1} P^{(
\numrounds-1)}$. Hence, Condition~(i) of Lemma~\ref{lem:induction} is satisfied. Moreover, $Q \subseteq R$. By Lemma~\ref{lem:small-rhs}, this in particular implies that Condition~(ii) of Lemma~\ref{lem:induction} is satisfied. We conclude that $Q \subseteq (1+\eps)^{\numrounds} P^{(\numrounds)}$.
\end{proof}

\begin{remark}
	Note that Theorem~\ref{thm:main-2} shows the existence of such a relaxation $Q$, but it is not known if $Q$ can be constructed in polynomial-time. 
\end{remark}

\section*{AI statements}
AI was not used as a tool for writing this article, nor as a tool to find proofs of any parts of the theorems of this paper. 
AI was used for literature research and generating the tikz code of the figure. This exposition, theorems, and proofs are solely from the authors. 

\phantomsection
\addcontentsline{toc}{section}{References}
\printbibliography

\appendix

\section{Approximation factor of Theorem~\ref{thm:main-1}}\label{appendix:approx-factor}
In Theorem~\ref{thm:main-1}, $(1+\eps)^\numrounds$ is the target approximation factor. By taking $\eps = (1+\tilde{\eps})^{1/\numrounds} - 1$, or equivalently $\tilde{\eps} = (1+\eps)^t-1$, one can translate the result as a $(1+\tilde{\eps})$-approximation of the $\numrounds$-th Gomory closure. 
By the Newton's binomial theorem, we have
\begin{align*}
    \tilde{\eps} = (1+\eps)^t - 1 = \sum_{k=0}^\numrounds {\numrounds \choose k} \eps^k - 1 = \rb{1+t\eps+\binom{t}{2}\eps^2 +\cdots} - 1 = t\eps + O(\eps^2).
\end{align*}
After ignoring the higher order terms, we get $\tilde{\eps} \approx t\eps$.

\section{Inductive lemma for covering problems}\label{appendix:covering}
The following lemma is a natural analog of Lemma~\ref{lem:induction} to covering polytopes.
\begin{lemma} \label{lem:induction-covering}
    Let $\numrounds \in \Z_{\ge 1}$ and let $Q \subseteq \R^n_{\ge 0}$ be any convex relaxation of blocking type for the integer hull of $P$ such that 
    \begin{enumerate}[(i)]
    \item $(1+\eps)^{\numrounds-1}Q \subseteq P^{(\numrounds-1)}$,
    \item for every inequality $c^\intercal x \ge \ceil{\delta}$ that is valid for $P^{(\numrounds)}$ with $c \in \Z^n_{\ge 0}$ and $\ceil{\delta} < (1+\eps)/\eps$, the inequality $c^\intercal x \ge (1+\eps)^\numrounds \ceil{\delta}$ is valid for $Q$. 
    \end{enumerate}
    Then $(1+\eps)^{\numrounds}Q \subseteq P^{(\numrounds)}$.
\end{lemma}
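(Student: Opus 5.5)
The plan is to mirror the proof of Lemma~\ref{lem:induction}, replacing upper bounds and rounding down by lower bounds and rounding up, and using that $Q$ is of blocking type (upward monotone). Fix an inequality $c^\intercal x \ge \ceil{\delta}$ that is valid for $P^{(\numrounds)}$ with $c \in \Z^n_{\ge 0}$. It is enough to treat inequalities of this form: since $P^{(\numrounds)}$ is of blocking type, every facet-defining inequality other than the nonnegativity constraints has nonnegative coefficients. I may further assume it is a Gomory cut for $P^{(\numrounds-1)}$, i.e.\ $c^\intercal x \ge \delta$ is valid for $P^{(\numrounds-1)}$ with $\delta > 0$, since $P^{(\numrounds)}$ is cut out by exactly these inequalities. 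The goal is to show $c^\intercal x \ge \ceil{\delta}$ for every $x \in (1+\eps)^{\numrounds} Q$, or equivalently $c^\intercal y \ge \ceil{\delta}/(1+\eps)^{\numrounds}$ for every $y \in Q$. Once this holds for all such cuts, we get $(1+\eps)^\numrounds Q \subseteq P^{(\numrounds)}$.

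I read condition (ii) in the direction that is consistent with (i) and with the conclusion: for small right-hand sides it gives $(1+\eps)^\numrounds c^\intercal y \ge \ceil{\delta}$ on $Q$. That is, the inequality $c^\intercal x \ge \ceil{\delta}$ holds on $(1+\eps)^\numrounds Q$. With this reading the case $\ceil{\delta} < (1+\eps)/\eps$ is immediate.

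For the case $\ceil{\delta} \ge (1+\eps)/\eps$, the argument runs as follows.
\begin{itemize}
\item From (i) and the validity of $c^\intercal x \ge \delta$ on $P^{(\numrounds-1)}$, every $y \in Q$ satisfies $(1+\eps)^{\numrounds-1} c^\intercal y \ge \delta$.
\item Since $\delta > \ceil{\delta} - 1$, it suffices to show $\ceil{\delta} - 1 \ge \ceil{\delta}/(1+\eps)$.
\item That inequality is equivalent to $\eps \ceil{\delta} \ge 1+\eps$, i.e.\ $\ceil{\delta} \ge (1+\eps)/\eps$, which is exactly the case assumption. This explains the threshold $(1+\eps)/\eps$ in (ii), in place of the $1/\eps$ used in Lemma~\ref{lem:induction}.
\end{itemize}
Combining these gives $(1+\eps)^{\numrounds} c^\intercal y \ge (1+\eps)\delta \ge \ceil{\delta}$, as required.

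The main obstacle is the bookkeeping of the scaling direction. It is not a hard step, but because covering is the reverse of packing, one has to check that scaling $Q$ up by $(1+\eps)$ is the right operation and that the rounding loss $\ceil{\delta} - \delta < 1$ is absorbed by one extra factor $(1+\eps)$ precisely when $\ceil{\delta} \ge (1+\eps)/\eps$. One also needs to confirm the reduction to Gomory cuts with nonnegative $c$ for blocking-type polyhedra. Negative coefficients can be raised to zero while keeping the inequality valid, because the polyhedron is upward closed. Inequalities with $\ceil{\delta} \le 0$ are trivially valid on $(1+\eps)^\numrounds Q \subseteq \R^n_{\ge 0}$.
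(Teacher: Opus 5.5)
Your proposal is correct and is essentially the paper's proof. Both split on whether $\ceil{\delta} < (1+\eps)/\eps$, invoke (ii) in the small case, and in the large case combine (i) with $\delta \ge \ceil{\delta}-1 \ge \ceil{\delta}/(1+\eps)$. Your reading of (ii) as $c^\intercal x \ge (1+\eps)^{-\numrounds}\ceil{\delta}$ on $Q$ is what the paper's proof actually uses; the exponent sign in the statement and the $\floor{\delta}$ in the paper's stated goal are typos, and the literal (ii) implies your reading anyway whenever $\ceil{\delta} \ge 0$.
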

\begin{proof}
    Consider any valid inequality $c^\intercal x \ge \ceil{\delta}$ for $P^{(\numrounds)}$ where $c \in \Z^n_{\ge 0}$, $\delta \ge 1$ and $c^\intercal x \ge \delta$ is valid for $P^{(\numrounds-1)}$. Our goal is to show that $c^\intercal x \ge (1+\eps)^{-\numrounds} \floor{\delta}$ is valid for $Q$. 
        If $\ceil{\delta} < (1+\eps)/\eps$, this holds by (ii). Hence, we may assume that $\ceil{\delta} \ge (1+\eps)/\eps$. Since $c^\intercal x \ge \delta$ is valid for $P^{(\numrounds-1)}$ and $(1+\eps)^{\numrounds-1} Q \subseteq P^{(\numrounds-1)}$, the inequality $c^\intercal x \ge (1+\eps)^{1-\numrounds} \delta$ is valid for $Q$. 
    Using that $\delta \ge \ceil{\delta} - 1 \ge (1+\eps)^{-1} \ceil{\delta}$, we conclude that $c^\intercal x \ge (1+\eps)^{-\numrounds} \ceil{\delta}$ is valid for $Q$.
\end{proof}
As commented earlier, Lemma~\ref{lem:induction-covering} together with the results from \cite{fhw21} give an extended formulations of polynomial size which approximates the covering polytope within $(1+\eps)$.

\end{document}